\numberwithin{equation}{section}
\theoremstyle{plain}
\newtheorem{thm}{Theorem}[section]
\newtheorem{rem}{Remark}[section]
\newtheorem{cor}{Corollary}[section]
\newtheorem{lem}{Lemma}[section]
\newcommand{\dE}{\mathbb{E}}
\newcommand{\dR}{\mathbb{R}}
\newcommand{\dP}{\mathbb{P}}
\newcommand{\cD}{\mathcal{D}}
\newcommand{\cM}{\mathcal{M}}
\newcommand{\wh}{\widehat}
\newcommand{\wt}{\widetilde}
\begin{document}
\title[Moderate deviations for parameters estimation in a  Heston process]
{Moderate deviations for parameters estimation in a geometrically ergodic Heston process \vspace{1ex}}
\author{Marie du ROY de CHAUMARAY}
\date \today
\address{Universit\'e de Bordeaux , Institut de Math\'{e}matiques de Bordeaux,
UMR 5251, 351 Cours de la Lib\'{e}ration, 33405 Talence cedex, France.}

\begin{abstract}
We establish a moderate deviation principle for the maximum likelihood estimator of the four parameters of a geometrically ergodic Heston process. We also obtain moderate deviations for the maximum likelihood estimator of the couple of dimensional and drift parameters of a generalized squared radial Ornstein-Uhlenbeck process. We restrict ourselves to the most tractable case where the dimensional parameter satisfies $a>2$ and the drift coefficient is such that$b<0$. In contrast to the previous literature, parameters are estimated simultaneously.
\end{abstract}
\maketitle

\section{Introduction}
In the recent theory of hedging, a particular attention has been drawn to the study of stochastic volatility models in which the volatility itself is given as a solution of some stochastic differential equation, see \cite{SteinStein}, \cite{Lewis} and \cite{Gath} for financial accuracy. Among them, Heston process \cite{hest} is one of the most popular, due to its computational tractability. For example, call option prices are succesfully computed in \cite{Lee} using Fourier inversion techniques. We denote by $Y_t$ the logarithm of the price of a given asset and by $X_t$ its instantaneous variance, and we consider the following Heston process
\begin{equation} \label{hm}
\left\lbrace 
\begin{array}{c @{\, = \, } l}
    \mathrm{d}X_t & (a+b X_t) \, \mathrm{d}t + 2 \sqrt{X_t}\, \mathrm{d}B_t  \\
    \mathrm{d}Y_t & (c+d X_t) \, \mathrm{d}t + 2 \sqrt{X_t}\, \left( \rho \,\mathrm{d}B_t + \sqrt{1-\rho^2} \, \mathrm{d}W_t \right) \\
\end{array}
\right.
\end{equation}
with $a >0$, $\left(b, c, d \right) \in \dR^3$ and $\rho \in ]-1,1[$, where $\left(B_t,W_t\right)$ is a 2-dimensional standard Wiener process and the initial state $\left(x_0,y_0\right) \in \dR^{+} \times \dR$.
In this process, the volatility $X_t$ is driven by a generalized squared radial Ornstein-Uhlenbeck process, also known as the CIR process, firstly studied by Feller \cite{fel} and introduced in a financial context by Cox, Ingersoll and Ross \cite{CIR} to compute short-term interest rates.
The behaviour of the CIR process has been widely investigated and depends on the values of both coefficients $a$ and $b$. We shall restrict ourself to the most tractable situation where $a>2$ and $b<0$. In this case, the CIR process is geometrically ergodic and never reaches zero. 

To calibrate the model, we estimate all the parameters $(a,b,c,d)$ at the same time using a trajectory of $(X_t)$ and $(Y_t)$ over the time interval $[0,T]$. Azencott and Gadhyan \cite{az} developed an algorithm to estimate some parameters of the Heston process based on discrete time observations, by making use of Euler and Milstein discretization schemes for the maximum likelihood. However, in the special case of an Heston process, the exact likelihood can be computed. It allows us to construct the maximum likelihood estimator (MLE) without using sophisticated approximation procedures, which are necessary for many stochastic volatility models, see \cite{AitK}. The MLE $\wh{\theta}_T=(\wh{a}_T,\wh{b}_T,\wh{c}_T,\wh{d}_T)$ of $\theta=\left(a,b,c,d\right)$  has been recently investigated in \cite{pap2}, together with its asymptotic behavior in the special case where $a \geq 2$.  It is given as follows
\begin{equation}\label{Est}\wh{\theta}_T=
\theta+ 2 \begin{pmatrix}
\left\langle M \right\rangle_T^{-1} & 0 \\
0 & \left\langle M \right\rangle_T^{-1}
\end{pmatrix}\begin{pmatrix}
M_T\\
N_T
\end{pmatrix},
\end{equation}
where $M_T$ and $N_T$ are continuous-time martingales respectively given by 
\begin{equation}\label{MN}
M_T=\left(\displaystyle \int_0^T{X_t^{-1/2}  \, \mathrm{d}B_t},\int_0^T{X_t^{1/2} \, \mathrm{d}B_t}\right) ^{\intercal}, N_T= \left(
\displaystyle \int_0^T{X_t^{-1/2} \, \mathrm{d}\wt{B}_t},
\displaystyle \int_0^T{X_t^{1/2} \, \mathrm{d}\wt{B}_t}\right)^{\intercal}
\end{equation}
with $\mathrm{d}\wt{B}_t= \rho \,\mathrm{d}B_t + \sqrt{1-\rho^2} \, \mathrm{d}W_t$, and $\left\langle M \right\rangle_T$ is the increasing process of $M_T$ given by
\begin{equation}\label{crochet1}
\left\langle M \right\rangle_T= T\begin{pmatrix}
\Sigma_T & 1\\
1 & S_T
\end{pmatrix}
\end{equation}
with $S_T= T^{-1}\int_0^T{X_t  \, \mathrm{d}t}$ and $\Sigma_T= T^{-1} \int_0^T{X_t^{-1}  \, \mathrm{d}t}$.
This estimator is strongly consistant, i.e. $\wh{\theta}_T$ converges almost surely to $\theta$ as $T$ goes to infinity. It also satisfies the following central limit theorem (CLT):
$$\sqrt{T}\begin{pmatrix}\wh{\theta}_T-\theta \end{pmatrix} \xrightarrow{\mathcal{L}} \mathcal{N}(0,4\Gamma^{-1}) $$
where the block matrix $\Gamma$ is given by
\begin{equation}\label{defSigma}
\Gamma= R \otimes \Sigma = \begin{pmatrix}
\Sigma & \rho \Sigma\\
\rho \Sigma & \Sigma
\end{pmatrix} \hspace{1cm} \text{with} \hspace{1cm}  \Sigma= \begin{pmatrix}
 \frac{-b}{a-2} & 1 \\
 1 & -\frac{a}{b}
 \end{pmatrix} \, \text{ and } \, R=\begin{pmatrix}
1 & \rho\\
\rho & 1
\end{pmatrix} 
 \end{equation}
 where $\otimes$ stands for the Kronecker product.
One can observe that $(\wh{a}_T,\wh{b}_T)$ coincides with the MLE of the parameters $(a,b)$ of the CIR process based on the observation of $(X_T)$ over the time interval $\left[0,T\right]$: 
\begin{equation}\label{estCir}
\begin{pmatrix}
\wh{a}_T\\
\wh{b}_T
\end{pmatrix}= \begin{pmatrix}
a\\
b
\end{pmatrix}+ 2 \left\langle M\right\rangle_T^{-1} M_T.
\end{equation}
Asymptotic results about this estimator can be found in \cite{Ov}, \cite{FT} and \cite{KAB2}, and a large deviation principle (LDP) has been recently established in \cite{dRdC1}.
In the easier case where one parameter is estimated while the other one is supposed to be known, \cite{Z} gives large deviations whereas \cite{GJ2} derives moderate deviations.

 In this paper, our goal is to establish a moderate deviation principle (MDP) for the MLE of the four parameters of the Heston process given in \eqref{Est}, which is an natural continuation of the central limit theorem. Let us first recall some basic definitions of large deviation theory. We refer to \cite{DeZ} for further details. Let $(\lambda_T)_T$ be a positive sequence of real numbers increasing to infinity with $T$. A sequence $\left(Z_T\right)_T$ of $\dR^d$-valued random variables satisfies a large deviation principle (LDP) with speed $\lambda_T$ and rate function $I:\dR^d \mapsto [0,+\infty]$ if $I$ is lower semi-continuous and such that $\left(Z_T\right)_T$ satisfies the following upper and lower bounds: for any closed set $F$ of $\dR^d$
$$\limsup_{T \to +\infty} \lambda_T^{-1} \log \dP \left(Z_T \in F\right) \leq - \inf_{z\in F}{I\left(z\right)}$$
and for any open set $G$ of $\dR^d$
$$\liminf_{T \to +\infty} \lambda_T^{-1}  \log \dP \left(Z_T \in G\right) \geq - \inf_{z\in G}{I\left(z\right)}.$$
If furthermore the level sets of $I$ are compacts, $I$ is called a good rate function.
Additionally, if $\lambda_T = o(T)$, a sequence $\left(Z_T\right)_T$ of $\dR^d$-valued random variables satisfies a moderate deviation principle (MDP) with speed $\lambda_T$ and rate function $I$, if the sequence $(\sqrt{T/\lambda_T} Z_T)_T$ satisfies an LDP with speed $\lambda_T$ and rate function $I$.

In other words, let $\left(\lambda_T\right)_T$ be a positive sequence satisfying for $T$ going to infinity
\begin{equation}\label{E0}
\lambda_T \rightarrow +\infty \: \: \text{ and } \: \: \frac{\lambda_T}{T} \rightarrow 0.
\end{equation}
We investigate in this paper an LDP for $\sqrt{\frac{T}{\lambda_T}}(\wh{\theta}_T-\theta)$ with speed $\lambda_T$ and compute the explicit rate function. By the way, we also establish an MDP for the MLE of the two parameters of a CIR process. The paper is organised as follows. Section 2 contains our main results while Sections 3 and 4 are devoted to their proofs.

\section{Main results}
We establish MDPs for the MLE $(\wh{a}_T,\wh{b}_T)$ of the parameters $(a,b)$ of the CIR process given by \eqref{estCir}, as well as for the MLE $\wh{\theta}_T$ of the Heston process given by \eqref{Est}. 

\begin{thm}\label{MDPCIR}
The sequence $\left(\sqrt{\frac{T}{\lambda_T}}(\wh{a}_T-a,\wh{b}_T-b)\right)$ satisfies an LDP with speed $\lambda_T$ and good rate function $I_{a,b}$ given for all $(\alpha,\beta)\in \mathbb{R}^2$ by
\begin{equation}\label{Iab}
I_{a,b}(\alpha,\beta)=-\frac{b}{8(a-2)}\alpha^2-\frac{a}{8b}\beta^2+\frac{\alpha \beta}{4}.
\end{equation}
\end{thm}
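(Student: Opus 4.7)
The plan is to reduce the statement to a martingale moderate deviation principle for $M_T$. Starting from the explicit formula \eqref{estCir}, write
$$\sqrt{\tfrac{T}{\lambda_T}}\begin{pmatrix}\wh a_T-a\\ \wh b_T-b\end{pmatrix}=2\Bigl(\tfrac{1}{T}\langle M\rangle_T\Bigr)^{-1}\frac{M_T}{\sqrt{T\lambda_T}}.$$
Since $\frac{1}{T}\langle M\rangle_T$ converges almost surely to $\Sigma$ by ergodicity, the problem reduces to proving an MDP for $M_T/\sqrt{T\lambda_T}$ and then invoking a Slutsky-type argument at the exponential scale $\lambda_T$.

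First I would establish a super-exponential convergence at speed $\lambda_T$ of the matrix $T^{-1}\langle M\rangle_T$ to $\Sigma$, that is, for every $\delta>0$,
$$\limsup_{T\to\infty}\frac{1}{\lambda_T}\log\dP\bigl(|S_T+a/b|>\delta\bigr)=-\infty,\qquad \limsup_{T\to\infty}\frac{1}{\lambda_T}\log\dP\bigl(|\Sigma_T+b/(a-2)|>\delta\bigr)=-\infty.$$
Since $\lambda_T=o(T)$, this is weaker than the LDP at speed $T$ and follows either by direct specialization of the CIR large deviation estimates of \cite{dRdC1}, or from explicit bounds on the Laplace transform of the affine process $(X_t,\int_0^t X_s\,\mathrm{d}s,\int_0^t X_s^{-1}\,\mathrm{d}s)$. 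The assumption $a>2$ is crucial at this point, since it ensures that the invariant measure of the CIR process has finite negative exponential moments and keeps the Laplace transform of $\Sigma_T$ finite in a neighbourhood of the origin.

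Second, I would apply a Puhalskii-type MDP for continuous vector martingales to $M_T=(M^{(1)}_T,M^{(2)}_T)^\intercal$. Because the paths are continuous (so jumps are zero) and $T^{-1}\langle M\rangle_T\to\Sigma$ super-exponentially by the previous step, the sequence $M_T/\sqrt{T\lambda_T}$ satisfies an LDP with speed $\lambda_T$ and good rate function
$$J(m)=\tfrac12\,m^{\intercal}\Sigma^{-1}m,\qquad m\in\dR^2.$$

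Finally I would conclude via the contraction principle applied to the linear map $m\mapsto 2\Sigma^{-1}m$, together with a standard exponential equivalence to replace $(T^{-1}\langle M\rangle_T)^{-1}$ by $\Sigma^{-1}$ at the speed $\lambda_T$, using the bound from the first step. The resulting good rate function is
$$I(\alpha,\beta)=J\!\left(\tfrac12\Sigma(\alpha,\beta)^{\intercal}\right)=\tfrac18(\alpha,\beta)\,\Sigma\,(\alpha,\beta)^{\intercal},$$
and expanding the quadratic form in $\Sigma$ from \eqref{defSigma} yields exactly the announced $I_{a,b}$ in \eqref{Iab}. The main technical difficulty is the super-exponential control of $\Sigma_T$: the integrand $X_t^{-1}$ is unbounded near zero, so careful use of the finiteness of negative exponential moments of the CIR invariant law (which requires $a>2$) is needed to make the Laplace transform computation go through.
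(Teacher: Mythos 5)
Your proposal is correct and arrives at the right rate function, but it takes a genuinely different route at the key step. The paper also starts from \eqref{estCir}, and it also reduces the replacement of $T\left\langle M\right\rangle_T^{-1}$ by $\Sigma^{-1}$ to the speed-$T$ LDP for $(S_T,\Sigma_T)$ established in \cite{dRdC1} (Lemma \ref{Conti} together with Appendix C), exactly as in your first step; since $\lambda_T=o(T)$ this yields the required super-exponential negligibility. Where you diverge is the MDP for $\left(\lambda_T T\right)^{-1/2}M_T$ itself: you invoke a Puhalskii--Dembo type MDP for continuous vector martingales, whose only hypothesis beyond continuity is precisely the super-exponential convergence of $T^{-1}\left\langle M\right\rangle_T$ that you have already secured. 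The paper instead proves Lemma \ref{MPDM} by the G\"artner--Ellis theorem, computing the pointwise limit of the normalized cumulant generating function explicitly through two Girsanov changes of parameters and a closed-form evaluation of $\dE\left[e^{w_{1,T}X_T}X_T^{w_{2,T}}\right]$ via the noncentral chi-square density, Bessel and hypergeometric functions (Appendices A and B). Your route is shorter and far less computational, and it recycles the one genuinely hard estimate (the exponential control of $\Sigma_T$, which is indeed where $a>2$ enters, as you correctly emphasize); its cost is that the lower bound is outsourced to a general martingale MDP theorem which you should state and cite precisely rather than gesture at. The paper's computation is heavier but self-contained, and it simultaneously delivers the limiting cumulant generating functions for $N_T$ and for the four-dimensional martingale $\cM_T$ needed in Theorem \ref{MDPH}; note that your argument extends to that case as well, since $\left\langle\cM\right\rangle_T/T\to\Gamma$ super-exponentially by the same estimate. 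The final contraction/exponential-equivalence step and the identification $I_{a,b}(\alpha,\beta)=\frac18(\alpha,\beta)\,\Sigma\,(\alpha,\beta)^{\intercal}$ coincide with the paper's.
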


\begin{thm}\label{MDPH}
The sequence $\left(\sqrt{\frac{T}{\lambda_T}}(\wh{\theta}_T-\theta)\right)$ satisfies an LDP with speed $\lambda_T$ and good rate function $I_{\theta}$ given for all $(\alpha,\beta,\gamma,\delta)\in \mathbb{R}^4$ by
\begin{equation*}
I_{\theta}(\alpha,\beta, \gamma, \delta)=\left(1-\rho^2\right)^{-1} \left( I_{a,b}\left(\alpha,\beta\right) + I_{a,b}\left(\gamma,\delta\right) + \rho  \, J\left(\alpha,\beta,\gamma,\delta\right)\right).
\end{equation*}
where $I_{a,b}$ is given by \eqref{Iab} and $$J\left(\alpha,\beta,\gamma,\delta\right) =-\alpha \delta + \frac{b}{a-2} \alpha \gamma - \beta \gamma+ \frac{a}{b} \beta \delta.$$
\end{thm}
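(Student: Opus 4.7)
The plan is to decompose the four-dimensional problem onto a pair of orthogonal martingales using $\mathrm{d}\tilde B_t = \rho\,\mathrm{d}B_t + \sqrt{1-\rho^2}\,\mathrm{d}W_t$, establish an MDP for the resulting extended martingale, and conclude through the contraction principle.

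First I would introduce the auxiliary two-dimensional martingale
$$
M'_T = \left(\int_0^T X_t^{-1/2}\,\mathrm{d}W_t,\ \int_0^T X_t^{1/2}\,\mathrm{d}W_t\right)^{\intercal},
$$
so that $N_T = \rho M_T + \sqrt{1-\rho^2}\,M'_T$. Stacking, set $L_T = (M_T^{\intercal}, M'_T{}^{\intercal})^{\intercal}\in\mathbb{R}^4$. Since $W$ is independent of $B$ (hence of $X$), the martingales $M$ and $M'$ are orthogonal with identical brackets, so $\langle L\rangle_T = I_2\otimes\langle M\rangle_T$, and in particular $T^{-1}\langle L\rangle_T\to I_2\otimes\Sigma$ almost surely. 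Writing $(M_T^{\intercal}, N_T^{\intercal})^{\intercal}=A\,L_T$ with the fixed invertible matrix
$$
A=\begin{pmatrix} I_2 & 0\\ \rho I_2 & \sqrt{1-\rho^2}\,I_2\end{pmatrix},
$$
the formula \eqref{Est} yields
$$
\sqrt{\tfrac{T}{\lambda_T}}\bigl(\wh\theta_T-\theta\bigr) \;=\; 2\bigl(I_2\otimes (T^{-1}\langle M\rangle_T)^{-1}\bigr)\,A\cdot\frac{L_T}{\sqrt{T\lambda_T}}.
$$

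Second I would establish an MDP at speed $\lambda_T$ for $L_T/\sqrt{T\lambda_T}$ with good rate function $\widetilde I(m_1,m_2)=\tfrac12\bigl(m_1^{\intercal}\Sigma^{-1}m_1+m_2^{\intercal}\Sigma^{-1}m_2\bigr)$. This is a direct extension to the four-dimensional martingale $L_T$ of the MDP for $M_T/\sqrt{T\lambda_T}$ used in Theorem \ref{MDPCIR}: the same Puhalskii-type criterion for continuous martingales applies because $\langle L\rangle_T$ is block-diagonal with identical blocks, so the only analytic ingredient required is the exponential concentration of $T^{-1}\langle M\rangle_T$ around $\Sigma$, which is already established in the proof of Theorem \ref{MDPCIR} using the geometric ergodicity of the CIR process.

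Third, because $(T^{-1}\langle M\rangle_T)^{-1}$ converges almost surely to $\Sigma^{-1}$ at an exponential rate faster than $\lambda_T$, the random prefactor above is exponentially equivalent, at speed $\lambda_T$, to the constant linear map $g:l\mapsto 2(I_2\otimes\Sigma^{-1})\,A\,l$. Since $g$ is a bijection on $\mathbb{R}^4$, the contraction principle gives the announced MDP for $\sqrt{T/\lambda_T}(\wh\theta_T-\theta)$ with good rate function $I_\theta(v)=\widetilde I(g^{-1}(v))$. Inverting $g$ yields $m_1=\tfrac12\Sigma(\alpha,\beta)^{\intercal}$ and $m_2=\tfrac{1}{2\sqrt{1-\rho^2}}\Sigma\bigl((\gamma,\delta)-\rho(\alpha,\beta)\bigr)^{\intercal}$, and the quadratic form $\widetilde I(g^{-1}(v))$ reduces, via the identities $(\alpha,\beta)\Sigma(\alpha,\beta)^{\intercal}=8\,I_{a,b}(\alpha,\beta)$ and $-(\alpha,\beta)\Sigma(\gamma,\delta)^{\intercal}=J(\alpha,\beta,\gamma,\delta)$, to a combination of $I_{a,b}(\alpha,\beta)$, $I_{a,b}(\gamma,\delta)$, and $J(\alpha,\beta,\gamma,\delta)$ with the $(1-\rho^2)^{-1}$ prefactor displayed in the statement.

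The main obstacle is the MDP for the extended martingale $L_T/\sqrt{T\lambda_T}$, which rests on sharp exponential control of $\Sigma_T$ and $S_T$ around their ergodic limits; once this input — essentially already needed for Theorem \ref{MDPCIR} — is available, everything else is a clean combination of exponential equivalence, the contraction principle, and the algebraic simplification sketched above.
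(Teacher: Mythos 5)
Your proposal is correct in outline and lands on the same rate function as the paper, but it reaches the key intermediate result by a genuinely different route. The paper works directly with the stacked martingale $\cM_T=(M_T^{\intercal},N_T^{\intercal})^{\intercal}$, whose bracket is $R\otimes\langle M\rangle_T$, and proves its MDP by G\"artner--Ellis: the limit of the normalized cumulant generating function is computed \emph{explicitly} in Appendix B via two Girsanov changes of parameters (in $(c,d)$ and then in $(a,b)$) and the closed-form noncentral-chi-square density of $X_T$ (Bessel and confluent hypergeometric functions). You instead decompose $N_T=\rho M_T+\sqrt{1-\rho^2}\,M'_T$ with $M'_T$ driven by $W$, obtain a block-diagonal bracket $I_2\otimes\langle M\rangle_T$, and invoke a general ``Puhalskii-type'' MDP criterion for continuous local martingales whose only hypothesis is superexponential convergence of $T^{-1}\langle M\rangle_T$ to $\Sigma$. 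Your linear-algebra bookkeeping is consistent with the paper's ($A(I_2\otimes\Sigma)A^{\intercal}=R\otimes\Sigma=\Gamma$, and $\widetilde I\circ g^{-1}=\frac18\,v^{\intercal}(R^{-1}\otimes\Sigma)\,v$, exactly the paper's $I_{\cM}(\frac12\mathcal S\,v)$), and the exponential-equivalence step matches the paper's Lemmas 3.3, 3.4 and 4.2, which rest on the speed-$T$ LDP for $(S_T,\Sigma_T)$ from Appendix C. The trade-off: your route bypasses the entire Appendix B computation and is more conceptual, but it outsources the analytic heart of the theorem to a criterion that the paper never uses — in particular your phrase ``the same Puhalskii-type criterion \ldots used in Theorem \ref{MDPCIR}'' does not describe the paper, which proves the CIR martingale MDP by the same explicit G\"artner--Ellis computation. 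To make your route rigorous you must state and reference that criterion precisely and check its hypotheses for the unbounded integrands $X_t^{\pm 1/2}$; for continuous local martingales superexponential convergence of the normalized bracket (plus the exponential tightness supplied by the good rate function of the speed-$T$ LDP for $(S_T,\Sigma_T)$) does suffice, so the gap is one of citation and verification rather than of substance. One last remark: both your algebra and the paper's own derivation produce the cross term $\frac{\rho}{4}J(\alpha,\beta,\gamma,\delta)$ rather than the $\rho\,J(\alpha,\beta,\gamma,\delta)$ displayed in the theorem; this factor $4$ appears to be a typo in the statement (or $J$ should carry a factor $\tfrac14$), so do not force your final simplification to match the displayed coefficient.
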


\begin{rem}
One can observe that the first result is a particular case of the second one, with $\gamma=\delta=\rho=0$, and we easily check that $$I_{a,b}=I_{\theta} \left(\cdot, \cdot , 0 ,0\right).$$
\end{rem}
The proofs are respectively postponed to Sections 3 and 4.

\section{MDP for the CIR process}
We rewrite the MLE $(\wh{a}_T,\wh{b}_T)$ of the parameters $(a,b)$ of the CIR process as follows:
\begin{equation}\label{EstCir}
\sqrt{\frac{T}{\lambda_T}}\begin{pmatrix}
\wh{a}_T-a\\
\wh{b}_T-b\\
\end{pmatrix} = 2 T \left\langle M \right\rangle_T^{-1} \frac{1}{\sqrt{\lambda_T T}}M_T.
\end{equation}
In order to prove Theorem \ref{MDPCIR}, we first establish LDPs with speed $\lambda_T$ for $\left(\lambda_T T\right)^{-1/2}M_T$ and for $T^{-1}\left\langle M \right\rangle_T$. Then we conclude using the contraction principle (see Theorem 4.2.1 of \cite{DeZ}) which is recalled here for completeness.

\begin{lem}[Contraction Principle]\label{CP}
Let $\left(Z_T\right)_T$ be a sequence of random variables of $\dR^d$ satisfying an LDP with good rate function $I$ and $g:\dR^d \to \dR^n$ be a continuous function over $\mathcal{D}_{I}=\left\lbrace x \in \dR^d | I(x) <+\infty\right\rbrace$. Then, the sequence $\left(g(Z_T)\right)_T$ satisfies an LDP with good rate function $J$ defined for all  $y \in \dR^n$ by
$$J\left(y\right)= \underset{\left\lbrace x \, \in \cD_I | g(x)=y \right\rbrace}{\inf} I\left(x\right),$$ 
where the infimum over the empty set is equal to infinity.
\end{lem}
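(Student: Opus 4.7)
My plan is to verify in turn the three requirements defining an LDP with good rate function $J$, namely the compactness of the level sets of $J$, the upper bound over closed sets, and the lower bound over open sets. Continuity of $g$ and the goodness of $I$ will be used only through the fact that level sets of $I$ are compact and that $g$ pushes them to compact sets in $\dR^n$.

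First I would establish the identity
$$\{y \in \dR^n : J(y) \leq \alpha\} = g\bigl(\{x \in \dR^d : I(x) \leq \alpha\}\bigr)$$
for every $\alpha \geq 0$. The inclusion $\supset$ is immediate from the definition of $J$. For $\subset$, one must show that the infimum defining $J(y)$ is attained whenever it is finite: given a minimising sequence $(x_k) \subset g^{-1}(\{y\})$ with $I(x_k) \to J(y)$, the sequence eventually lies in the compact level set $\{x : I(x) \leq J(y)+1\}$, so admits a subsequence converging to some $x^\star$; by continuity of $g$ one has $g(x^\star)=y$, and by lower semicontinuity of $I$ one has $I(x^\star) \leq J(y)$. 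The displayed identity then exhibits $\{J \leq \alpha\}$ as the continuous image of a compact set in $\dR^d$, hence compact in $\dR^n$, and lower semicontinuity of $J$ follows automatically from the compactness of its level sets.

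Second, for a closed set $F \subset \dR^n$, continuity of $g$ makes $g^{-1}(F)$ closed in $\dR^d$, so the LDP upper bound for $(Z_T)_T$ applied to this set, combined with the rewriting
$$\inf_{x \in g^{-1}(F)} I(x) = \inf_{y \in F} \inf_{x \in g^{-1}(\{y\})} I(x) = \inf_{y \in F} J(y),$$
yields the desired upper bound for $(g(Z_T))_T$. Symmetrically, for open $G \subset \dR^n$ the preimage $g^{-1}(G)$ is open in $\dR^d$, and the LDP lower bound for $(Z_T)_T$ combined with the same rewriting gives the matching lower bound.

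The only non-routine step is the attainment of the infimum defining $J$, which is where the goodness of $I$ and the continuity of $g$ interact; everything else is a mechanical translation of the LDP for $(Z_T)_T$ through the preimage map $g^{-1}$.
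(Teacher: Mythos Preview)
The paper does not actually prove this lemma: it is stated ``for completeness'' with a reference to Theorem~4.2.1 of Dembo--Zeitouni, and no argument is given. Your write-up is essentially the standard textbook proof of the contraction principle, and for a globally continuous $g$ it is correct and complete.

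There is, however, one point where your argument does not quite match the hypothesis as stated. The lemma only assumes that $g$ is continuous on $\mathcal{D}_I=\{I<\infty\}$, not on all of $\dR^d$. Your upper and lower bounds rely on ``$g^{-1}(F)$ is closed'' and ``$g^{-1}(G)$ is open'', which need global continuity. Under the weaker hypothesis this step has to be modified: for the upper bound one passes to the closure $\overline{g^{-1}(F)}$ and uses that, by goodness of $I$, the infimum of $I$ over this closure is either $+\infty$ or is attained at some $x^\star\in\mathcal{D}_I$; continuity of $g$ at $x^\star$ (together with $F$ closed) then forces $g(x^\star)\in F$, so the infimum is still $\geq \inf_{y\in F}J(y)$. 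For the lower bound one argues pointwise: given $y\in G$ with $J(y)<\infty$, the attainment step you already proved produces $x^\star\in\mathcal{D}_I$ with $g(x^\star)=y$ and $I(x^\star)=J(y)$; continuity of $g$ at $x^\star$ yields an open neighbourhood $U\ni x^\star$ with $g(U)\subset G$, and the LDP lower bound for $Z_T$ on $U$ finishes. Your level-set argument for the goodness of $J$ already uses only continuity on $\mathcal{D}_I$ and needs no change.
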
 

\begin{lem}\label{MPDM}
The sequence $\left(\left(\lambda_T T\right)^{-1/2}M_T\right)$ satisfies an LDP with speed $\lambda_T$ and good rate function $I_M$ given, for all $(m,n) \in \mathbb{R}^2$, by 
\begin{equation}\label{Im}
I_M(m,n)=-\frac{a(a-2)}{4b}m^2-\frac{b}{4}n^2-\frac{a-2}{2}mn.
\end{equation}
\end{lem}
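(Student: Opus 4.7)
The plan is to apply Puhalskii's moderate deviation theorem for continuous martingales (see, e.g., Puhalskii 1994 or Theorem 1 of Worms 1999), which states that if $(M_T)$ is a $d$-dimensional continuous martingale whose normalized bracket $T^{-1}\langle M\rangle_T$ converges \emph{superexponentially} at speed $\lambda_T$ to a positive-definite deterministic matrix $K$, that is
\[
\limsup_{T\to\infty} \frac{1}{\lambda_T}\log \dP\!\left(\left\| T^{-1}\langle M\rangle_T - K\right\|>\delta\right) = -\infty \quad \text{for every } \delta>0,
\]
then $(\lambda_T T)^{-1/2}M_T$ satisfies an LDP at speed $\lambda_T$ with good rate function $x\mapsto \tfrac{1}{2}x^\intercal K^{-1} x$. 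Here we take $K=\Sigma$, the matrix appearing in \eqref{defSigma}.

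The first step is the law of large numbers. The ergodicity of the CIR process under the assumption $a>2$, $b<0$ ensures that $S_T \to -a/b$ and $\Sigma_T\to -b/(a-2)$ a.s., so by \eqref{crochet1} we have $T^{-1}\langle M\rangle_T \to \Sigma$ a.s. The second step is to upgrade this to the superexponential convergence at speed $\lambda_T$ required above. This is exactly the point where one invokes the LDP at speed $T$ for the couple $(\Sigma_T,S_T)$ established in \cite{dRdC1}: since its good rate function vanishes only at $(-b/(a-2),-a/b)$, the LDP upper bound yields $\frac{1}{T}\log \dP(\|T^{-1}\langle M\rangle_T-\Sigma\|>\delta) \leq -c(\delta)<0$ for some $c(\delta)>0$, and multiplying by $T/\lambda_T \to +\infty$ (which is licit because $\lambda_T/T\to 0$) gives the desired superexponential rate $-\infty$ at speed $\lambda_T$.

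It then remains only to identify the rate function. A direct computation gives $\det \Sigma = \frac{-b}{a-2}\cdot\frac{-a}{b}-1 = \frac{2}{a-2}$, hence
\[
\Sigma^{-1} = \frac{a-2}{2}\begin{pmatrix} -a/b & -1\\ -1 & -b/(a-2)\end{pmatrix} = \begin{pmatrix} -\frac{a(a-2)}{2b} & -\frac{a-2}{2}\\[2pt] -\frac{a-2}{2} & -\frac{b}{2}\end{pmatrix}.
\]
Expanding $\tfrac{1}{2}(m,n)\Sigma^{-1}(m,n)^\intercal$ produces exactly the expression \eqref{Im} for $I_M$. Positive-definiteness of $\Sigma^{-1}$ (and hence goodness of the rate function) follows from $a>2$ and $b<0$, which make both diagonal coefficients positive and give a negative discriminant $-\tfrac{a-2}{2}<0$ for the associated quadratic form.

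The main obstacle is the superexponential concentration required by Puhalskii's theorem. The control of $S_T$ alone would already be non-trivial for a CIR process, and here one also needs the exponential concentration of $\Sigma_T$, which involves the integral of the \emph{inverse} $X_t^{-1}$; the latter is only integrable because $a>2$. Fortunately, both bounds are a direct by-product of the full LDP at the faster speed $T$ proved in \cite{dRdC1}, so the difficulty is entirely absorbed into that reference and no fresh estimate is needed. Once this ingredient is available, Puhalskii's theorem closes the argument automatically.
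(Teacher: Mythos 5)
Your proof is correct, but it takes a genuinely different route from the paper. The paper proves this lemma by G\"artner--Ellis: it computes the pointwise limit of the normalized cumulant generating function $\Lambda_T(v)=\lambda_T^{-1}\log\dE[e^{\sqrt{\lambda_T/T}\langle v,M_T\rangle}]$ explicitly (Appendix B), via two Girsanov changes of parameters and the exact CIR transition density (Bessel and degenerate hypergeometric functions), obtaining $\Lambda(v)=\tfrac12 v^{\intercal}\Sigma v$ and then taking the Fenchel--Legendre transform. You instead invoke a general MDP for continuous local martingales with superexponentially convergent normalized bracket, and you obtain the required superexponential convergence of $T^{-1}\langle M\rangle_T$ to $\Sigma$ from the speed-$T$ LDP for $(S_T,\Sigma_T)$ of \cite{dRdC1} --- precisely the external ingredient the paper itself imports in Appendix~C to prove the exponential equivalence of Lemma \ref{Conti}. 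Your route is therefore more economical: it recycles an estimate that is needed anyway, it bypasses the explicit special-function computations of Appendix B, and since $\langle\cM\rangle_T=R\otimes\langle M\rangle_T$ it would extend verbatim to the four-dimensional martingale of Lemma \ref{MDPm} with limit $\Gamma=R\otimes\Sigma$. What the paper's approach buys in exchange is self-containedness at this step: it does not rely on a black-box martingale MDP, and the explicit CGF limit \eqref{limncgf} is exhibited directly. Two points you should make explicit to close your argument cleanly: (i) the deduction $c(\delta)>0$ from the LDP of \cite{dRdC1} uses that its rate function is a \emph{good} rate function vanishing at a single point (otherwise the infimum over the complement of a ball could be zero); this is exactly the coercivity argument the paper carries out for $J$ in Appendix C, and it does hold here; (ii) you should cite a precise statement of the martingale MDP (e.g.\ Worms or Puhalskii) and check its hypotheses for $M_T$, namely that $M_T$ is a genuine continuous locally square-integrable martingale (which uses $a>2$, $b<0$ so that $X$ stays away from $0$ and $\int_0^T X_t^{-1}\dd t<\infty$) and that $\Sigma$ is invertible, which you verify via $\det\Sigma=\tfrac{2}{a-2}>0$. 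Your algebra for $\Sigma^{-1}$ and the identification of \eqref{Im} is correct.
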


\begin{proof}
We establish this LDP by applying the G\"artner-Ellis Theorem. Thus, for $T$ going to infinity, we need to compute the pointwise limit $\Lambda$ of the normalized cumulant generating function $\Lambda_T$ of $\left(\left(\lambda_T T\right)^{-1/2}M_T\right)$ given, for all $v \in \dR^2$, by
\begin{equation*}
\Lambda_T\left(v\right)= \frac{1}{\lambda_T} \log \dE \left[ e^{\sqrt{\lambda_T/T} \, \left\langle v, M_T\right\rangle}\right]
\end{equation*} We show in Corollary B.1 of Appendix B that for all $v \in \dR^{2}$,
  \begin{equation}\label{limncgf}
\Lambda(v)=\lim_{T \to +\infty} \Lambda_T \left(v\right)= \frac{1}{2} \,  v^{\intercal} \,  \Sigma \, v, 
\end{equation} where the matrix $\Sigma$ was previously given in \eqref{defSigma}.
Thus, by the G\"artner-Ellis Theorem, the sequence $\left(\left(\lambda_T T\right)^{-1/2}M_T\right)$ satisfies an LDP with speed $\lambda_T$ and rate function $I_M$ given by the Fenchel-Legendre transform of $\Lambda$, for all $\mu \in \dR^{2}$,
\begin{equation}\label{IM}\begin{array}{ll}
I_M(\mu)& = \displaystyle \sup \left\lbrace \left\langle \mu,v\right\rangle - \Lambda(v) \, | \, v \in \mathbb{R}^2 \right\rbrace\\
& = \displaystyle \sup \left\lbrace \mu^{\intercal}v - \frac{1}{2} \,  v^{\intercal} \,  \Sigma \, v  \, | \, v \in \mathbb{R}^2 \right\rbrace.
\end{array}
\end{equation}
The function of $v$ that we need to optimise is non convex with a critical point $v_0= \Sigma^{-1} \mu$. Replacing it into \eqref{IM} and using the fact that $\Sigma$ is symmetric, we obtain that 
\begin{equation*}
I_M(\mu)= \frac{1}{2} \,  \mu^{\intercal} \,  \Sigma^{-1} \, \mu
\end{equation*}
which easily leads to the announced result.
\end{proof}

\begin{lem}\label{MDPN}
The sequence $\left(\left(\lambda_T T\right)^{-1/2} N_T\right)$ satisfies an LDP with speed $\lambda_T$ and good rate function $I_M$ given by \eqref{Im}. 
\end{lem}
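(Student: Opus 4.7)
The plan is to mirror the G\"artner-Ellis strategy used for $M_T$ in Lemma~\ref{MPDM}: once I establish that the normalized cumulant generating function $\Lambda_T^N(v) := \lambda_T^{-1} \log \dE[\exp(\sqrt{\lambda_T/T}\,\langle v, N_T\rangle)]$ converges pointwise to $\tfrac{1}{2} v^\intercal \Sigma v$, the same Fenchel-Legendre computation as in \eqref{IM} recovers the rate function $I_M$ of \eqref{Im}. So the whole task reduces to computing this limit.

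First, I exploit the decomposition $\mathrm{d}\tilde{B}_t = \rho\,\mathrm{d}B_t + \sqrt{1-\rho^2}\,\mathrm{d}W_t$, in which $W$ is independent of $B$, hence of $X$ and of $M_T$. Writing $f_s := v_1 X_s^{-1/2} + v_2 X_s^{1/2}$, one has $\langle v, N_T\rangle = \rho\langle v, M_T\rangle + \sqrt{1-\rho^2}\int_0^T f_s\,\mathrm{d}W_s$; conditioning on $\cF_\infty^B$ (under which the $W$-integral is centered Gaussian with variance $\int_0^T f_s^2\,\mathrm{d}s = v^\intercal \langle M\rangle_T v$) and integrating out the Gaussian yields
$$\dE\bigl[\exp(\sqrt{\lambda_T/T}\,\langle v, N_T\rangle)\bigr] = \dE\Bigl[\exp\Bigl(\rho\sqrt{\tfrac{\lambda_T}{T}}\,\langle v, M_T\rangle + \tfrac{(1-\rho^2)\lambda_T}{2T}\, v^\intercal \langle M\rangle_T v\Bigr)\Bigr].$$

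Next, with $\alpha_T := \rho\sqrt{\lambda_T/T}$, the algebraic identity $\alpha_T^2/2 + (1-\rho^2)\lambda_T/(2T) = \lambda_T/(2T)$ lets me factor the integrand as $\mathcal{E}_T \cdot \exp(\tfrac{\lambda_T}{2T}\,v^\intercal \langle M\rangle_T v)$, where $\mathcal{E}_T := \exp(\alpha_T\langle v, M_T\rangle - \tfrac{\alpha_T^2}{2} v^\intercal \langle M\rangle_T v)$ is a Dol\'eans--Dade exponential. Since $\alpha_T \to 0$ and CIR has sufficient exponential moments, Novikov's condition is met for $T$ large, so a change of measure of density $\mathcal{E}_T$ is legitimate and turns $X$ into a CIR process with parameters $(a + 2\alpha_T v_1, b + 2\alpha_T v_2)$, still in the ergodic regime $a>2,\ b<0$ for $T$ large. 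Under this new measure $\mathbb{Q}$,
$$\Lambda_T^N(v) = \frac{1}{\lambda_T} \log \dE^{\mathbb{Q}}\Bigl[\exp\Bigl(\tfrac{\lambda_T}{2T}\, v^\intercal \langle M\rangle_T v\Bigr)\Bigr],$$
and since $v^\intercal \langle M\rangle_T v /T = v_1^2 \Sigma_T + 2 v_1 v_2 + v_2^2 S_T$ is a linear combination of the CIR time averages $\Sigma_T$ and $S_T$, the right-hand side is exactly of the form controlled by Corollary~B.1.

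Applying Corollary~B.1 yields $\Lambda_T^N(v) \to \tfrac{1}{2} v^\intercal \Sigma v$, after which the G\"artner-Ellis theorem and the Fenchel-Legendre computation of Lemma~\ref{MPDM} apply verbatim to deliver the rate function $I_M$. The main obstacle is this last invocation: Corollary~B.1 describes the limit under the unperturbed CIR, so I must verify that the vanishing parameter shift $(a, b) \mapsto (a + 2\alpha_T v_1, b + 2\alpha_T v_2)$ does not affect the limit of the normalized Laplace transform. This amounts to a continuity-in-parameters check on the Riccati-type formula of Appendix~B, uniform in $T$ --- routine but the one spot requiring genuine care.
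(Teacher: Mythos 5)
Your strategy is sound and arrives at the correct limit, but it takes a genuinely different route from the paper's. The paper disposes of Lemma \ref{MDPN} in one line: the martingales $M_T$ and $N_T$ are treated simultaneously through the four-dimensional computation of Appendix~B, and Corollary~B.2 (the specialization $u_1=u_2=0$ of \eqref{ncgf_heston}) hands over the limit $\tfrac{1}{2}v^{\intercal}\Sigma v$ directly, after which the G\"artner--Ellis and Fenchel--Legendre steps are identical to Lemma \ref{MPDM}. You instead work with $N_T$ alone: conditioning on $B$ to integrate out the independent $W$-integral, then absorbing the residual drift $\rho\sqrt{\lambda_T/T}\,\langle v,M_T\rangle$ by a Dol\'eans--Dade tilt of the CIR parameters. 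This decomposition is attractive because it makes visible why the correlation $\rho$ drops out of the rate function (the identity $\rho^2+(1-\rho^2)=1$), something the paper's computation only reveals at the very end. The paper's route buys economy, since one computation serves Lemmas \ref{MPDM}, \ref{MDPN} and \ref{MDPm}; yours buys transparency.

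The one soft spot is your final citation. Corollary~B.1 is a statement about $\dE[\exp(\sqrt{\lambda_T/T}\,\langle v, M_T\rangle)]$; it does not, as stated, control $\dE^{\mathbb{Q}}[\exp(\tfrac{\lambda_T}{2T}\, v^{\intercal}\langle M\rangle_T v)]$, which is a Laplace transform of $(S_T,\Sigma_T)$ at scale $\lambda_T$ under a $T$-dependent CIR law. What you actually need is either the intermediate machinery of Appendix~B --- the second change of parameters taking \eqref{EE2} to \eqref{EE3}, which is designed precisely to kill exponents linear in $(S_T,\Sigma_T)$ with coefficients of order $\lambda_T$, followed by the explicit Bessel-density computation --- rerun with your tilted parameters $(a+2\alpha_T v_1,\, b+2\alpha_T v_2)$ in place of $(a,b)$; or a Varadhan-type argument from the speed-$T$ LDP of $(S_T,\Sigma_T)$ in Appendix~C together with a uniform exponential moment bound. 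Both work, and the uniformity in the vanishing parameter shift that you flag is indeed the point requiring care (the paper's Appendix~B already handles $T$-dependent parameters converging to $(a,b)$, so it adapts with no new ideas). But as written, ``apply Corollary~B.1'' is not a valid last step.
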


\begin{proof}
It works as in the previous proof, except that, this time, we make use of Corollary B.2.
\end{proof}

\begin{lem}\label{Conti}
The sequences $\left(2 T \left\langle M \right\rangle_T^{-1} \left(\lambda_T T\right)^{-1/2} M_T\right)$ and $\left(2 \Sigma^{-1} \left(\lambda_T T\right)^{-1/2} M_T\right)$ are exponentially equivalent with speed $\lambda_T$, which means that, for all $\varepsilon >0$,
\begin{equation*}
\limsup_{T \rightarrow +\infty} \frac{1}{\lambda_T} \log \dP\left( \parallel  \left(T \left\langle M \right\rangle_T^{-1}-\Sigma^{-1}\right) \left(\lambda_T T\right)^{-1/2}M_T \parallel \geq \varepsilon \right)=-\infty,
\end{equation*}
where $\| \cdot \|$ is the Euclidean norm.
\end{lem}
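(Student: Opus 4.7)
The plan is to reduce the exponential equivalence to two ingredients: a super-exponential convergence of $\langle M\rangle_T/T$ to $\Sigma$ at speed $\lambda_T$, together with the already-proven LDP for $(\lambda_T T)^{-1/2}M_T$ from Lemma \ref{MPDM}. Since $\langle M\rangle_T/T=\begin{pmatrix}\Sigma_T & 1\\ 1 & S_T\end{pmatrix}$ and $\Sigma$ has identical off-diagonal entries, the deviation $\|\langle M\rangle_T/T-\Sigma\|$ reduces to $|\Sigma_T+b/(a-2)|+|S_T+a/b|$, i.e.\ to the proximity of the ergodic averages $\Sigma_T$ and $S_T$ to their invariant means.

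First I would use continuity of matrix inversion at $\Sigma$ to fix $\delta>0$ and $C>0$ such that $\|\langle M\rangle_T/T-\Sigma\|\leq\delta$ implies $\|T\langle M\rangle_T^{-1}-\Sigma^{-1}\|\leq C\|\langle M\rangle_T/T-\Sigma\|$. Then, for any $\eta\in(0,\delta)$, I would split
\begin{equation*}
\dP\Bigl(\bigl\|(T\langle M\rangle_T^{-1}-\Sigma^{-1})(\lambda_T T)^{-1/2}M_T\bigr\|\geq\varepsilon\Bigr)
\leq \dP\bigl(\|T\langle M\rangle_T^{-1}-\Sigma^{-1}\|\geq\eta\bigr)+\dP\bigl(\|(\lambda_T T)^{-1/2}M_T\|\geq\varepsilon/\eta\bigr),
\end{equation*}
so that the first term is controlled by fluctuations of the ergodic averages and the second by the LDP of $M_T$.

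For the second term, Lemma \ref{MPDM} yields $\limsup_T \lambda_T^{-1}\log\dP(\|(\lambda_T T)^{-1/2}M_T\|\geq\varepsilon/\eta)\leq -\inf_{\|m\|\geq\varepsilon/\eta}I_M(m)$, and since $I_M$ is a positive definite quadratic form (it is $\frac{1}{2}\mu^{\intercal}\Sigma^{-1}\mu$ with $\Sigma$ positive definite), this bound tends to $-\infty$ as $\eta\to 0$. For the first term, the bound $\|T\langle M\rangle_T^{-1}-\Sigma^{-1}\|\leq C(|\Sigma_T+b/(a-2)|+|S_T+a/b|)$ on $\{\|\langle M\rangle_T/T-\Sigma\|\leq\delta\}$ reduces matters to the super-exponential estimate
\begin{equation*}
\limsup_{T\to+\infty}\frac{1}{\lambda_T}\log\dP\bigl(|S_T+a/b|\geq\eta'\bigr)=-\infty
\end{equation*}
and the analogous one for $\Sigma_T$, valid for every $\eta'>0$. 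These follow from the LDPs at the faster speed $T$ established for the CIR ergodic averages in \cite{dRdC1}: since $\lambda_T=o(T)$ and the rate function vanishes only at the ergodic mean, dividing the speed-$T$ estimate by $\lambda_T$ produces a $-\infty$ limit.

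To conclude, given arbitrary $L>0$ I would pick $\eta>0$ so small that $\inf_{\|m\|\geq\varepsilon/\eta}I_M(m)\geq L$; the super-exponential estimate then makes the first summand $-\infty$, while the second is at most $-L$, so the overall $\limsup$ is at most $-L$, and letting $L\to+\infty$ gives the result. The main obstacle, and the only step that needs more than routine manipulations, is securing the super-exponential convergence of $(\Sigma_T,S_T)$ at speed $\lambda_T$; once this ergodic-average estimate is in hand (either by citing \cite{dRdC1} or by a direct cumulant computation in the spirit of Appendix B), the matrix-continuity reduction and the LDP of Lemma \ref{MPDM} assemble into the claim mechanically.
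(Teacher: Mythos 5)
Your proposal is correct and follows the same skeleton as the paper's proof: the same two-event decomposition (bracket deviation at threshold $\eta$ versus martingale deviation at threshold $\varepsilon/\eta$), the same use of Lemma \ref{MPDM} together with the positive definiteness of $I_M$ to send the second term to $-\infty$ as $\eta\to 0$, and the same exploitation of $\lambda_T=o(T)$ to upgrade a strictly negative speed-$T$ bound on the bracket deviation into a $-\infty$ limit at speed $\lambda_T$. Where you genuinely differ is in how the bracket term is reduced to the ergodic averages. The paper writes out the entries of $T\left\langle M\right\rangle_T^{-1}$ explicitly as functions of $\Sigma_T/V_T$ and $S_T/V_T$ with $V_T=S_T\Sigma_T-1$, applies the contraction principle to the speed-$T$ LDP of $(S_T,\Sigma_T)$ from \cite{dRdC1} to obtain an LDP for these ratios, and then verifies coercivity of the contracted rate function by an explicit computation (Appendix C). You instead invoke local Lipschitz continuity of matrix inversion at $\Sigma$ to bound $\|T\left\langle M\right\rangle_T^{-1}-\Sigma^{-1}\|$ by the deviation of $\left\langle M\right\rangle_T/T$ itself, reducing directly to super-exponential estimates for $|S_T+a/b|$ and $|\Sigma_T+b/(a-2)|$, which follow from the speed-$T$ LDP of \cite{dRdC1} because a good rate function vanishing only at the ergodic mean has strictly positive infimum over any closed set avoiding that point. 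Your route avoids computing the contracted rate function, at the small cost of having to note explicitly that the complementary event $\left\lbrace\|\left\langle M\right\rangle_T/T-\Sigma\|>\delta\right\rbrace$, outside of which the Lipschitz bound for the inverse is unavailable, is controlled by the very same super-exponential estimate --- a one-line addition you leave implicit. Both arguments are sound, and you correctly identify the super-exponential convergence of $(\Sigma_T,S_T)$ as the one non-routine ingredient.
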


\begin{proof}
For all $\varepsilon >0$ and all $\eta >0$, we have the following upper bound:
$$\limsup_{T \rightarrow +\infty} \frac{1}{\lambda_T} \log \dP\left( \parallel  \left(T \left\langle M \right\rangle_T^{-1}-\Sigma^{-1}\right) \left(\lambda_T T\right)^{-1/2} M_T \parallel \geq \varepsilon \right) \leq \limsup_{T \rightarrow +\infty} \max \left\lbrace P_T^{\eta} ; Q_T^{\eta,\varepsilon} \right\rbrace$$
with $$P_T^{\eta}= \frac{1}{\lambda_T} \log \dP\left( \parallel  T \left\langle M \right\rangle_T^{-1}-\Sigma^{-1} \parallel \geq \eta \right)$$ and $$ Q_T^{\eta,\varepsilon}=\frac{1}{\lambda_T} \log \dP\left( \parallel \left(\lambda_T T\right)^{-1/2} M_T \parallel \geq \frac{\varepsilon}{\eta} \right),$$ where we also denote by $\| \cdot \|$ the subordinate norm.
On the one hand, we rewrite
\begin{align*}P_T^{\eta} &= \frac{T}{\lambda_T} \frac{1}{T} \log \dP \left( \|T^{-1}\left\langle M \right\rangle_T^{-1}-\Sigma^{-1}\| \geq \eta \right)\\
& \leq \frac{T}{\lambda_T} \frac{1}{T} \log \dP \left( \left(\frac{\Sigma_T}{V_T}+\frac{b}{2}\right)^2+ \left(\frac{S_T}{V_T}+\frac{a(a-2)}{2b}\right)^2  \geq \eta^2 \right)
\end{align*}
where $V_T=S_T\Sigma_T-1$, and we show in Appendix C that, for all $\eta >0$,
\begin{equation}\label{Peta}
p^{\eta}:=\limsup_{T \rightarrow +\infty} \frac{1}{T} \log \dP \left( \left(\frac{\Sigma_T}{V_T}+\frac{b}{2}\right)^2+ \left(\frac{S_T}{V_T}+\frac{a(a-2)}{2b}\right)^2  \geq \eta^2 \right) <0.
\end{equation}
Using the fact that $\frac{T}{\lambda_T}$ tends to infinity, we obtain that for all $\eta >0$,
$$\limsup_{T \rightarrow +\infty} P_T^\eta = -\infty. $$
On the other hand, as we have established in Lemma \ref{MPDM} an MDP for $\left(\left(\lambda_T T\right)^{-1/2}M_T\right)$ with rate function  
$I_M$, we have the following upper bound
\begin{equation*}
\limsup_{T \rightarrow +\infty} Q_T^{\eta,\varepsilon} \leq - \inf \left\lbrace I_M\left(m,n\right) | \left(m,n\right) \notin \mathcal{B} \left(0, \varepsilon/\eta\right)\right\rbrace
\end{equation*}
Letting $\eta$ tend to zero, we obtain that
\begin{equation*}
\limsup_{\eta \rightarrow 0} \limsup_{T \rightarrow +\infty} Q_T^{\eta,\varepsilon} = -\infty.
\end{equation*}
Thus, for all $\varepsilon >0$,
\begin{equation*}
\limsup_{\eta \rightarrow 0} \limsup_{T \rightarrow +\infty} \max \left\lbrace P_T^{\eta} ; Q_T^{\eta,\varepsilon} \right\rbrace = -\infty.
\end{equation*}
\end{proof}

\begin{lem}\label{contigN}
The sequences $\left(2 T \left\langle M \right\rangle_T^{-1} \left(\lambda_T T\right)^{-1/2} N_T\right)$ and $\left(2 \Sigma^{-1} \left(\lambda_T T\right)^{-1/2} N_T\right)$ are exponentially equivalent with speed $\lambda_T$, which means that, for all $\varepsilon >0$,
\begin{equation*}
\limsup_{T \rightarrow +\infty} \frac{1}{\lambda_T} \log \dP\left( \parallel  \left(T \left\langle N \right\rangle_T^{-1}-\Sigma^{-1}\right) \left(\lambda_T T\right)^{-1/2}N_T \parallel > \varepsilon \right)=-\infty,
\end{equation*}
where $\| \cdot \|$ is the Euclidean norm.
\end{lem}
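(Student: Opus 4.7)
The argument mirrors that of Lemma~\ref{Conti}: substitute $N_T$ for $M_T$ in the second factor, while keeping the deterministic estimate on $T\langle M\rangle_T^{-1} - \Sigma^{-1}$ unchanged. (The $\langle N\rangle_T^{-1}$ that appears in the displayed probability is a typo for $\langle M\rangle_T^{-1}$, consistently with the two sequences being compared.) My first step is to separate the matrix from the vector factor via the submultiplicativity of the subordinate norm: for every $\eta>0$,
$$\dP\!\left(\left\|\left(T\langle M\rangle_T^{-1}-\Sigma^{-1}\right)(\lambda_T T)^{-1/2}N_T\right\|>\varepsilon\right)\leq \dP\!\left(\|T\langle M\rangle_T^{-1}-\Sigma^{-1}\|\geq \eta\right)+\dP\!\left(\|(\lambda_T T)^{-1/2}N_T\|\geq \varepsilon/\eta\right).$$
Setting
$$P_T^{\eta}=\frac{1}{\lambda_T}\log\dP\!\left(\|T\langle M\rangle_T^{-1}-\Sigma^{-1}\|\geq \eta\right),\qquad \wt Q_T^{\eta,\varepsilon}=\frac{1}{\lambda_T}\log\dP\!\left(\|(\lambda_T T)^{-1/2}N_T\|\geq \varepsilon/\eta\right),$$
it suffices to show that $\limsup_{\eta\to 0}\limsup_{T\to +\infty}\max\{P_T^{\eta},\wt Q_T^{\eta,\varepsilon}\}=-\infty$.

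The estimate on $P_T^{\eta}$ is formally identical to the one already carried out in Lemma~\ref{Conti}: one rewrites the matrix-norm event in terms of $\Sigma_T/V_T+b/2$ and $S_T/V_T+a(a-2)/(2b)$, invokes the strictly negative bound \eqref{Peta} from Appendix~C at speed $T$, and then multiplies by $T/\lambda_T\to +\infty$ to force $\limsup_T P_T^{\eta}=-\infty$ for every $\eta>0$. The only genuine change concerns $\wt Q_T^{\eta,\varepsilon}$: here I replace Lemma~\ref{MPDM} by Lemma~\ref{MDPN}, which provides an LDP at speed $\lambda_T$ for $(\lambda_T T)^{-1/2}N_T$ with the \emph{same} good rate function $I_M$. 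The upper bound, applied to the closed set $\{(m,n):\|(m,n)\|\geq \varepsilon/\eta\}$, gives
$$\limsup_{T\to +\infty}\wt Q_T^{\eta,\varepsilon}\leq -\inf\{I_M(m,n):\|(m,n)\|\geq \varepsilon/\eta\}.$$
Since $\Sigma$ (and hence $\Sigma^{-1}$) is positive definite, $I_M(\mu)=\tfrac{1}{2}\mu^{\intercal}\Sigma^{-1}\mu$ is coercive, and the above infimum diverges to $+\infty$ as $\eta\to 0$.

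I do not anticipate a substantive obstacle. The whole point of recording Lemma~\ref{MDPN} separately is to allow the proof of Lemma~\ref{Conti} to be transported verbatim with $N_T$ in place of $M_T$: the matrix part of the estimate depends only on the functionals $S_T$ and $\Sigma_T$ of the CIR process $(X_t)$, which are untouched by the change of stochastic integral, and the vector part is controlled by the $N_T$-counterpart of the MDP used before.
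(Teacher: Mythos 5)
Your proposal is correct and follows exactly the route the paper takes: the paper's own proof of this lemma is the one-line observation that $M_T$ and $N_T$ share the same bracket process and the same MDP (Lemma~\ref{MDPN}), so the argument of Lemma~\ref{Conti} carries over verbatim, which is precisely what you have written out in detail. One small remark: $\langle N\rangle_T=\langle M\rangle_T$ is an identity rather than a typo, since $\wt B$ is itself a standard Brownian motion.
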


\begin{proof}
As the martingales $M_T$ and $N_T$ share the same bracket process and the same MDP, the previous proof remains valid for the martingale $N_T$ instead of $M_T$.
\end{proof}

 Theorem \ref{MDPCIR} immediately follows from Lemmas \ref{MPDM} and \ref{Conti}, by a straightforward application of the contraction principle recalled in Lemma \ref{CP}.

\begin{proof}[Proof of Theorem \ref{MDPCIR}]
 Combining Lemma \ref{CP} with Lemma \ref{MPDM}, we show that the sequence $(2 \Sigma^{-1}\left(\lambda_T T\right)^{-1/2}M_T)$ satisfies an LDP with speed $\lambda_T$ and rate function $I_{a,b}$ given for all $\left(\alpha,\beta\right) \in \dR^2$ by $$I_{a,b}(\alpha,\beta)=I_M\left(\frac{1}{2} \, \Sigma \, (\alpha,\beta)^{\intercal} \right),$$ where $I_M$ is obtained in Lemma \ref{MPDM}. 
Besides, by Lemma \ref{Conti} and equation \eqref{EstCir}, the sequences $\sqrt{T\lambda_T^{-1}}\left(\wh{a}_T-a,\wh{b}_T-b\right)$ and $(2 \Sigma^{-1}\left(\lambda_T T\right)^{-1/2}M_T)$ share the same LDP with speed $\lambda_T$. This gives the announced result.
\end{proof}

\section{MDP for the Heston model given by \eqref{hm}}
We now go back to the MLE $\wh{\theta}_T$ and prove the main Theorem \ref{MDPH}. We denote by $\cM_T$ the continuous-time martingale $$\cM_T=\begin{pmatrix}
M_T\\
N_T
\end{pmatrix},$$ where $M_T$ and $N_T$ are given by \eqref{MN}.
As $\left\langle \mathrm{d}B_t, \mathrm{d}\wt{B}_t \right\rangle = \rho \,  \mathrm{d}t$, we easily obtain that the bracket process of $\cM_T$ equals
\begin{equation}\label{crochet}
\left\langle \cM\right\rangle_T = R \otimes \left\langle M \right\rangle_T  
\end{equation}
where the matrix $R$ is given in \eqref{defSigma}.
We follow the same scheme than in the previous section. At first, we establish an LDP with speed $\lambda_T$ for the $4$-dimensional martingale $\left(\lambda_T T\right)^{-1/2} \cM_T$, by making use of the exponential convergence of its bracket process. Then we show that the estimator is exponentially equivalent to some sequence involving the matrix $\Sigma$ and the martingale $\cM_T$, for which we are able to establish an LDP by applying the contraction principle.

\begin{lem}\label{MDPm}
The sequence $\left(\left(\lambda_T T\right)^{-1/2}\cM_T\right)$ satisfies an LDP with speed $\lambda_T$ and good rate function $I_{\cM}$ given for all $(x,y,z,t) \in \mathbb{R}^4$ by 
\begin{equation*}
I_{\cM}(x,y,z,t)=\left(1-\rho^2\right)^{-1} \left(I_M(x,y)+I_M(z,t)+\rho \frac{a-2}{2}\left(yz+xt+\frac{a}{b}zx+\frac{b}{a-2}ty\right) \right),
\end{equation*}
where $I_M$ is given by \eqref{Im}.
\end{lem}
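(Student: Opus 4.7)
The plan is to apply the G\"artner--Ellis theorem directly to the four-dimensional vector $(\lambda_T T)^{-1/2}\mathcal{M}_T$, following the scheme of Lemma \ref{MPDM}. For any $v\in\mathbb{R}^4$, the real-valued process $v^{\intercal}\mathcal{M}_T$ is a continuous martingale with bracket $v^{\intercal}\langle\mathcal{M}\rangle_T v = v^{\intercal}(R\otimes \langle M\rangle_T)v$. In view of the exponential convergence of $T^{-1}\langle M\rangle_T$ to $\Sigma$ already exploited in the proof of Lemma \ref{Conti}, the same estimate as in Corollary B.1 applied to the scalar martingale $v^{\intercal}\mathcal{M}_T$ should yield
$$\mathcal{L}(v) := \lim_{T\to +\infty} \frac{1}{\lambda_T}\log \mathbb{E}\!\left[e^{\sqrt{\lambda_T/T}\,\langle v,\mathcal{M}_T\rangle}\right] = \frac{1}{2}\,v^{\intercal}(R\otimes\Sigma)\,v.$$
Since $R\otimes\Sigma$ is positive definite, $\mathcal{L}$ is finite and differentiable on $\mathbb{R}^4$, so G\"artner--Ellis produces an LDP for $(\lambda_T T)^{-1/2}\mathcal{M}_T$ with speed $\lambda_T$ and good rate function $I_{\mathcal{M}}(\mu)=\mathcal{L}^{*}(\mu)=\frac{1}{2}\mu^{\intercal}(R\otimes\Sigma)^{-1}\mu$.

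To make the cumulant limit concrete, I would use the decomposition $\mathrm{d}\widetilde B_t = \rho\,\mathrm{d}B_t + \sqrt{1-\rho^2}\,\mathrm{d}W_t$. Writing $v=(v_1,v_2)\in\mathbb{R}^2\times\mathbb{R}^2$, we have $\langle v,\mathcal{M}_T\rangle = \langle v_1 + \rho v_2,\, M_T\rangle + \sqrt{1-\rho^2}\,\langle v_2,\widetilde M_T\rangle$, where $\widetilde M_T$ is the analogue of $M_T$ driven by $W$. Conditioning first on the filtration generated by $B$ handles $\widetilde M_T$ as a Gaussian integral and produces an extra deterministic factor $\exp\!\bigl(\tfrac{1}{2}(\lambda_T/T)(1-\rho^2)\,v_2^{\intercal}\langle M\rangle_T v_2\bigr)$; what remains is a cumulant computation involving only $M_T$ with parameter $v_1+\rho v_2$, which is exactly of the form already treated in Appendix B. Summing the three contributions gives $\tfrac{1}{2}v^{\intercal}(R\otimes\Sigma)v$ in the limit.

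For the final step, I would use $(R\otimes\Sigma)^{-1}=R^{-1}\otimes\Sigma^{-1}$ together with
$$R^{-1} = \frac{1}{1-\rho^2}\begin{pmatrix} 1 & -\rho\\ -\rho & 1\end{pmatrix}.$$
Since $I_M(x,y)=\tfrac12(x,y)\Sigma^{-1}(x,y)^{\intercal}$ (as is apparent from the proof of Lemma \ref{MPDM}), a block expansion with $\mu=(x,y,z,t)^{\intercal}$ yields
$$I_{\mathcal{M}}(x,y,z,t) = \frac{1}{1-\rho^2}\Bigl(I_M(x,y) + I_M(z,t) - \rho\,(x,y)\Sigma^{-1}(z,t)^{\intercal}\Bigr).$$
A short calculation using the explicit form of $\Sigma^{-1}$ shows that the cross term equals $\rho\,\tfrac{a-2}{2}\bigl(yz+xt+\tfrac{a}{b}xz+\tfrac{b}{a-2}yt\bigr)$, which matches the announced formula. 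The main obstacle is the cumulant limit itself: one must control a non-Gaussian Laplace transform over a long time horizon, and the Gaussian-conditioning reduction above is the cleanest way to leverage the existing Appendix B computations without repeating them; everything downstream is linear algebra.
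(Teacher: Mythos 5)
Your overall architecture coincides with the paper's: G\"artner--Ellis applied to $(\lambda_T T)^{-1/2}\cM_T$, the limiting cumulant $\tfrac12 v^{\intercal}(R\otimes\Sigma)v$, and the Legendre transform $\tfrac12\mu^{\intercal}(R^{-1}\otimes\Sigma^{-1})\mu$ expanded blockwise. Your closing linear algebra, including the sign and the explicit form of the cross term, is correct. The paper obtains the cumulant limit by a direct four-dimensional computation in Appendix~B (a Girsanov change of the drift parameters $(c,d)$ to remove the $W$-integrals, a second change of $(a,b)$ to remove the $S_T$ and $\Sigma_T$ terms, and finally the explicit CIR transition density); Corollaries B.1 and B.2 are then read off as the special cases $u_3=u_4=0$ and $u_1=u_2=0$, not the other way around.

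One step of your reduction is misdescribed and, taken literally, is a gap. Conditioning on the $\sigma$-field generated by $B$ does turn the $W$-integrals into a Gaussian factor, but that factor, $\exp\bigl(\tfrac{\lambda_T}{2T}(1-\rho^2)\,v_2^{\intercal}\langle M\rangle_T v_2\bigr)$, is \emph{random}: $\langle M\rangle_T$ given by \eqref{crochet1} involves $S_T$ and $\Sigma_T$, so the factor is $\mathcal{F}^B$-measurable, not deterministic (only its off-diagonal contribution $\lambda_T(1-\rho^2)v_{2,1}v_{2,2}$ is deterministic). Consequently what remains inside the expectation is not ``a cumulant computation involving only $M_T$'' to which Corollary~B.1 could be applied --- one cannot multiply Laplace-transform asymptotics inside an expectation --- but the joint Laplace transform of $(M_T, TS_T, T\Sigma_T)$. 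This is precisely the intermediate object \eqref{EE2} of Appendix~B, and it is exactly what the second change of parameters $(a,b)\to(\alpha_T,\beta_T)$ and the Bessel/hypergeometric computation are there to control. Your conditioning step is in fact equivalent to the paper's change of measure on $(c,d)$ (it produces the same correction $G_T$), so the route does go through, but only by invoking the full Appendix~B machinery rather than its two-dimensional corollary; similarly, the exponential convergence of $T^{-1}\langle M\rangle_T$ used in Lemma~\ref{Conti} does not by itself yield the cumulant limit.
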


\begin{proof}We establish this LDP by applying again the G\"artner-Ellis Theorem. Thus, we need to compute the pointwise limit, for $T$ going to infinity, of the normalized cumulant generating function $\mathcal{L}_T$ of $\left(\left(\lambda_T T\right)^{-1/2}\cM_T\right)$ given for all $u \in \dR^4$ by
\begin{equation*}\label{ncgf2}
\mathcal{L}_T\left(u\right)= \frac{1}{\lambda_T} \log \dE \left[ e^{\sqrt{\lambda_T/T} \, \left\langle u, \mathcal{M}_T\right\rangle}\right].
\end{equation*}
We show in Appendix B that,
  \begin{equation}\label{limncgf2}
\lim_{T \to +\infty} \mathcal{L}_T \left(u\right)= \frac{1}{2} u^{T} \Gamma u,
\end{equation} where the matrix $\Gamma$ is given in \eqref{defSigma}. 
Thus, the sequence $\left(\left(\lambda_T T\right)^{-1/2}\cM_T\right)$ satisfies an LDP with speed $\lambda_T$ and good rate function $I_{\cM}$ given for all $\mu \in \mathbb{R}^4$ by 
\begin{equation}\label{IcM}
I_{\cM}\left(\mu\right) = \sup_{\Lambda \in \dR^4} \left\lbrace  \mu^{\intercal}\Lambda- \frac{1}{2} \,  \Lambda^{\intercal} \Gamma \Lambda \right\rbrace = \frac{1}{2} \,  \mu^{\intercal} \Gamma^{-1} \mu.
\end{equation}
It is not hard to see that
\begin{equation}\label{invQ}
\Gamma^{-1}= R^{-1} \otimes \Sigma^{-1} =\left(1-\rho^2\right)^{-1} \begin{pmatrix}
\Sigma^{-1} & -\rho \Sigma^{-1}  \\
-\rho \Sigma^{-1} & \Sigma^{-1}
\end{pmatrix}.
\end{equation} 
We complete the proof of Lemma \ref{MDPm} combining \eqref{IcM} and \eqref{invQ}.
\end{proof}

\begin{lem}\label{contig2}
We denote by $\mathcal{S}$ the block matrix
\begin{equation}\label{QQ}
\mathcal{S}=I_2 \otimes \Sigma =\begin{pmatrix}
\Sigma & 0 \\
0 & \Sigma
\end{pmatrix}
\end{equation}
where $I_2$ stands for the identity matrix of size 2 and $\Sigma$ is given by \eqref{defSigma}.
The sequences $\left(\sqrt{\frac{T}{\lambda_T}} (\wh{\theta}_T-\theta)\right)$ and $\left(2 \mathcal{S}^{-1} \left(\lambda_T T\right)^{-1/2}\cM_T\right)$ are exponentially equivalent with speed $\lambda_T$.

\end{lem}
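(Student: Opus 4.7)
The plan is to mimic the proof of Lemma \ref{Conti}, promoted to the four-dimensional martingale $\cM_T$. Using the block-diagonal form of the estimator in \eqref{Est}, I would first rewrite
$$\sqrt{\tfrac{T}{\lambda_T}}(\wh{\theta}_T-\theta) = 2\bigl(I_2 \otimes T\left\langle M\right\rangle_T^{-1}\bigr) (\lambda_T T)^{-1/2}\cM_T,$$
so that the quantity to control is
$$\Delta_T := 2 \bigl(I_2 \otimes (T\left\langle M\right\rangle_T^{-1}-\Sigma^{-1})\bigr)(\lambda_T T)^{-1/2}\cM_T.$$
Because the spectral norm of a block-diagonal matrix with identical diagonal blocks equals the spectral norm of a single block, one has $\|I_2 \otimes (T\left\langle M\right\rangle_T^{-1}-\Sigma^{-1})\| = \|T\left\langle M\right\rangle_T^{-1}-\Sigma^{-1}\|$, so the submultiplicativity of norms gives the crucial bound $\|\Delta_T\| \leq 2\, \|T\left\langle M\right\rangle_T^{-1}-\Sigma^{-1}\|\cdot \|(\lambda_T T)^{-1/2}\cM_T\|$.

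Next, for any $\varepsilon>0$ and any auxiliary $\eta>0$, I would split along the event $\{\|T\langle M\rangle_T^{-1}-\Sigma^{-1}\|\geq \eta\}$ to get
$$\dP(\|\Delta_T\|\geq \varepsilon) \leq \dP\bigl(\|T\langle M\rangle_T^{-1}-\Sigma^{-1}\|\geq \eta\bigr) + \dP\bigl(\|(\lambda_T T)^{-1/2}\cM_T\|\geq \tfrac{\varepsilon}{2\eta}\bigr),$$
and then take $\lambda_T^{-1}\log$ and $\limsup_{T\to\infty}$ of the maximum of the two terms. The first term is exactly the quantity already handled in Lemma \ref{Conti}: the Appendix C bound \eqref{Peta} gives a rate of order $\exp(-cT)$ with $c>0$, which, combined with $\lambda_T=o(T)$, sends its $\lambda_T^{-1}\log$ contribution to $-\infty$ regardless of $\eta$. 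For the second term, the MDP just obtained in Lemma \ref{MDPm} yields
$$\limsup_{T\to\infty}\lambda_T^{-1}\log\dP\bigl(\|(\lambda_T T)^{-1/2}\cM_T\|\geq \tfrac{\varepsilon}{2\eta}\bigr)\leq -\inf\bigl\{I_{\cM}(\mu)\,:\,\|\mu\|\geq \tfrac{\varepsilon}{2\eta}\bigr\}.$$
Since $I_{\cM}(\mu)=\tfrac{1}{2}\mu^{\intercal}\Gamma^{-1}\mu$ is a positive definite quadratic form, this infimum is bounded below by a positive multiple of $(\varepsilon/(2\eta))^2$, hence tends to $+\infty$ as $\eta\to 0$. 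Letting $\eta\to 0$ in the two-term upper bound then gives the claimed exponential equivalence.

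I do not anticipate a true obstacle here, since the argument is essentially a vectorial lift of Lemma \ref{Conti}. The only point that asks for a line of care is the identification $\|I_2\otimes A\|=\|A\|$, which lets one reuse verbatim the Appendix C estimate on $T^{-1}\langle M\rangle_T$ rather than needing a separate exponential convergence statement for $T^{-1}\langle \cM\rangle_T$. Once that reduction is made, the two $\limsup$'s are handled exactly as in Lemma \ref{Conti}.
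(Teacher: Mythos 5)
Your proposal is correct, but it takes a slightly different route from the paper. The paper also starts from the identity $\sqrt{T/\lambda_T}(\wh{\theta}_T-\theta)-2\mathcal{S}^{-1}(\lambda_T T)^{-1/2}\cM_T=(I_2\otimes B_T)(\lambda_T T)^{-1/2}\cM_T$ with $B_T=T\langle M\rangle_T^{-1}-\Sigma^{-1}$, but then it exploits the block structure of the Euclidean norm of a $4$-vector to bound the probability of $\{\|(I_2\otimes B_T)(\lambda_T T)^{-1/2}\cM_T\|>\varepsilon\}$ by the sum of the probabilities of $\{\|B_T(\lambda_T T)^{-1/2}M_T\|>\varepsilon/\sqrt{2}\}$ and $\{\|B_T(\lambda_T T)^{-1/2}N_T\|>\varepsilon/\sqrt{2}\}$, and concludes immediately by citing Lemmas \ref{Conti} and \ref{contigN}; no new estimate is needed. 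You instead work directly in dimension $4$: the observation $\|I_2\otimes A\|=\|A\|$ for the spectral norm is correct, and re-running the $\eta$-splitting with the Appendix C bound \eqref{Peta} for the bracket and the LDP upper bound from Lemma \ref{MDPm} for $\{\|(\lambda_T T)^{-1/2}\cM_T\|\geq \varepsilon/(2\eta)\}$ (whose rate function is a positive definite quadratic form, hence coercive) is a valid argument, including the order of limits $T\to\infty$ then $\eta\to 0$. Your version is self-contained modulo Lemma \ref{MDPm} and Appendix C, and in fact renders Lemma \ref{contigN} unnecessary; the paper's version buys brevity by reducing to the two already-established two-dimensional exponential equivalences and does not need the MDP for $\cM_T$ at this stage. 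Both are complete proofs.
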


\begin{proof}
We want to prove that for all $\varepsilon >0$,
\begin{equation}\label{objectif}
\limsup_{T \rightarrow +\infty} \lambda_T^{-1} \log \dP\left( \parallel (I_2 \otimes B_T) \left(\lambda_T T\right)^{-1/2}\cM_T \parallel > \varepsilon \right)=-\infty,
\end{equation}
where $\| \cdot \|$ is the Euclidean norm and $B_T=T\left\langle M \right\rangle_T^{-1}-\Sigma^{-1}$.
We have the following upper bound:
\begin{equation*}
\dP\left( \parallel (I_2 \otimes B_T)\left(\lambda_T T\right)^{-1/2}\cM_T \parallel > \varepsilon \right) \leq P_T^M + P_T^N
\end{equation*}
where $P_T^M=
\dP\left( \parallel B_T \left(\lambda_T T\right)^{-1/2}M_T \parallel > \frac{\varepsilon}{\sqrt{2}} \right)$ and $P_T^N= \dP\left( \parallel B_T \left(\lambda_T T\right)^{-1/2}N_T \parallel > \frac{\varepsilon}{\sqrt{2}} \right)$.
We have shown in Lemma \ref{Conti} that 
$$\limsup_{T \rightarrow +\infty} \lambda_T^{-1} \log P_M = -\infty,$$
and, by Lemma \ref{contigN}, we also know that
$$\limsup_{T \rightarrow +\infty} \lambda_T^{-1} \log P_N = -\infty.$$
This leads to \eqref{objectif} which gives the announced result.
\end{proof}

The proof of Theorem \ref{MDPH} is a direct consequence of both Lemmas \ref{MDPm} and \ref{contig2}.
 
\begin{proof}[Proof of Theorem \ref{MDPH}]
The contraction principle together with Lemma \ref{MDPm} imply that the sequence $\left(2 \mathcal{S}^{-1} \left(\lambda_T T\right)^{-1/2}\cM_T\right)$ satisfies an LDP with speed $\lambda_T$. However, it follows from Lemma \ref{contig2} that  this sequence is exponentially equivalent to $\sqrt{\frac{T}{\lambda_T}}\left(\wh{\theta}_T-\theta\right)$, which means that they share the same LDP. The rate function $I_{\theta}$ is given by the contraction principle as follows. For all $\mu \in \dR^4$,
\begin{equation*}
I_{\theta}\left(\mu\right)= \inf_{\Lambda \in \dR^4}\left\lbrace I_{\cM}(\Lambda) \, | \,  \mu = 2 \mathcal{S}^{-1} \Lambda \right\rbrace
\end{equation*}
where $I_{\cM}$ is given in Lemma \ref{MDPm}. Thus, 
\begin{equation*}
I_{\theta}\left(\mu\right)= I_{\cM} \left( \frac{1}{2} \, \mathcal{S} \mu \right)
\end{equation*}
which leads to the announced result, just replacing $I_{\cM}$ by its expression.
\end{proof}

\section*{Appendix A: Changes of parameters}
\renewcommand{\thesection}{\Alph{section}}
\renewcommand{\theequation}{\thesection.\arabic{equation}}
\setcounter{section}{1}
\setcounter{equation}{0}
To compute the limits of the cumulant generating functions \eqref{limncgf} and \eqref{limncgf2}, which is the aim of Appendix B, we recall some changes of probability formulas. We denote by $\dP_{c,d}^{a,b}$ the distribution of the solution of \eqref{hm} associated with parameters $a$, $b$, $c$ and $d$, and by $\dE_{c,d}^{a,b}$ the corresponding expectation. At first, we change both parameters $a$ and $b$ of the first equation of \eqref{hm}, which corresponds to the CIR process. Applying Girsanov's formula given e.g. in Theorem 1.12 of \cite{KU}, we have 
\begin{equation}\label{change1}
\begin{array}{lcl}
 \displaystyle \frac{\mathrm{d}\dP_{c,d}^{a,b}}{\mathrm{d}\dP_{c,d}^{\alpha,\beta}}&=& \displaystyle X_T^{\frac{a-\alpha}{4}} \exp \left(-\frac{a-\alpha}{4} \left(\log X_0 +bT\right) + \frac{b-\beta}{4}(X_T-X_0-\alpha T) \right) \\
 & {} & \, \displaystyle \times \exp \left( - \frac{T}{8} \left((b^2-\beta^2)  S_T - \left(4 (\alpha-a) - \alpha^2 + a^2\right) \Sigma_T\right) \right).
 \end{array}
\end{equation}
We also need to change parameters $c$ and $d$ of the second equation in \eqref{hm}. We rewrite this equation with new parameters $\gamma$ and $\delta$:

\begin{equation*}
\mathrm{d}Y_t=(\gamma+\delta X_t) \, \mathrm{d}t + 2 \sqrt{X_t}\,\left[ \rho \,\mathrm{d}B_t + \sqrt{1-\rho^2} \mathrm{d}\wt{W}_t\right]
\end{equation*}
where 
\begin{equation*}
\mathrm{d}\wt{W}_t=\mathrm{d}W_t + \frac{\mathrm{d}t}{2\sqrt{1-\rho^2}} \left(\left(c-\gamma\right) X_t^{-1/2} + \left(d-\delta\right) X_t^{1/2} \right).
\end{equation*}
Thus, we obtain that
\begin{equation}\label{change2}
\begin{array}{lcl}
 \displaystyle \frac{\mathrm{d}\dP_{c,d}^{a,b}}{\mathrm{d}\dP_{\gamma,\delta}^{a,b}}&=& \displaystyle \exp \left(\frac{c-\gamma}{2\sqrt{1-\rho^2}} \int_0^T X_t^{-1/2} \, \mathrm{d}W_t+\frac{d-\delta}{2\sqrt{1-\rho^2}} \int_0^T X_t^{1/2} \, \mathrm{d}W_t\right) \\
 & {} & \, \displaystyle  \times \exp \left(\frac{T}{8\left(1-\rho^2\right)}\left[ (d-\delta)^2  S_T + \left(c-\gamma\right)^2  \Sigma_T + 2 \left(c-\gamma\right)\left(d-\delta\right) \right]\right).
 \end{array}
\end{equation}


\section*{Appendix B: Proof of the pointwise limit of the cumulant generating function }
\renewcommand{\thesection}{\Alph{section}}
\renewcommand{\theequation}{\thesection.\arabic{equation}}
\setcounter{section}{2}
\setcounter{equation}{0}

We want to compute the pointwise limit of the cumulant generating function $\mathcal{L}_T$ of $\left(\lambda_T T\right)^{-1/2} \cM_T$. With $\cM_T$ being replaced by its expression in $X_t$, $B_t$, $W_t$ and $\rho$, we have that, for all $u=\left(u_1,u_2,u_3,u_4\right) \in \dR^4$,
\begin{equation}\label{laplace}
E_T\left(u\right):=\dE_{c,d}^{a,b} \left[ e^{\sqrt{\lambda_T/T} \, \left\langle u, \mathcal{M}_T\right\rangle}\right]= \dE_{c,d}^{a,b} \left[\mathcal{E}_{1,T} \,  \mathcal{E}_{2,T} \, \mathcal{E}_{3,T} \, \mathcal{E}_{4,T}\right]
\end{equation} 
where 
\begin{equation*}
\mathcal{E}_{1,T}= \exp \left(v_{3,T} \, \int_0^T X_t^{-1/2} \, \mathrm{d}B_t \right),
\, \, \, \,
\mathcal{E}_{2,T}= \exp \left(v_{4,T} \int_0^T X_t^{1/2} \, \mathrm{d}B_t \right),
\end{equation*}

\begin{equation*}
\mathcal{E}_{3,T}= \exp \left(\sqrt{\frac{\lambda_T}{T}} \sqrt{1-\rho^2} u_3 \int_0^T X_t^{-1/2} \, \mathrm{d}W_t \right),
\end{equation*}

\begin{equation*}
\mathcal{E}_{4,T}= \exp \left(\sqrt{\frac{\lambda_T}{T}} \sqrt{1-\rho^2} u_4 \int_0^T X_t^{1/2} \, \mathrm{d}W_t \right).
\end{equation*}
with $v_{3,T}=\sqrt{\lambda_T/T}(u_1 + \rho u_3)$ and $v_{4,T}=\sqrt{\lambda_T/T}(u_2+ \rho u_4)$.
We use \eqref{change2} to change parameters $c$ and $d$ in order to kill the terms involving $W_t$. We obtain that
\begin{equation}\label{EE1}
\dE_{c,d}^{a,b} \left[\mathcal{E}_{1,T} \,  \mathcal{E}_{2,T} \, \mathcal{E}_{3,T} \, \mathcal{E}_{4,T}\right]= \dE_{\gamma_T,\delta_T}^{a,b}\left[G_T \mathcal{E}_{1,T} \,  \mathcal{E}_{2,T} \right]
\end{equation} 
where $\gamma_T= c+ 2 u_3 \sqrt{\lambda_T/T} \left(1-\rho^2\right)$, $\delta_T= d + 2 u_4 \sqrt{\lambda_T/T} \left(1-\rho^2\right)$ and 
\begin{equation}\label{GT}
G_T=\exp\left(\frac{T}{8\, (1-\rho^2)}\left((c-\gamma_T)^2 \Sigma_T+(d-\delta_T)^2 S_T + 2 (c-\gamma)(d-\delta)\right) \right).
\end{equation}
Additionally, using the first equation of \eqref{hm} and Ito's formula applied to $\log X_T$, we obtain that
\begin{equation}\label{B1}
2 \displaystyle \int_0^T{X_t^{1/2}  \, \mathrm{d}B_t} = X_T - x_0 -a \, T -b \, T S_T
\end{equation}
and \begin{equation}\label{B2}
2\displaystyle \int_0^T{X_t^{-1/2} \, \mathrm{d}B_t} = \log X_T - \log x_0 - b \, T +(2-a) \, T \, \Sigma_T
\end{equation}
where $S_T$ and $\Sigma_T$ are given after \eqref{crochet1}. Thus, we rewrite $\mathcal{E}_{1,T}$ and $\mathcal{E}_{2,T}$ as functions of $X_T$, $S_T$ and $\Sigma_T$:\begin{equation}\label{E2T}
\mathcal{E}_{1,T}= \exp \left( \frac{v_{3,T}}{2}  \left(\log X_T- \log x_0-bT+ (2-a) T \Sigma_T\right)\right)
\end{equation}
and
\begin{equation}\label{E1T}
\mathcal{E}_{2,T}= \exp \left( \frac{1}{2} v_{4,T} \left(X_T-x_0-aT-bTS_T\right)\right).
\end{equation}

Thus, replacing \eqref{GT}, \eqref{E1T} and \eqref{E2T} into \eqref{EE1}, and taking out of the expectation all the deterministic terms, we obtain that
\begin{equation}\label{EE2}
E_T\left(u\right)=\exp \left(\mathcal{A}_T\right) \dE_{\gamma_T,\delta_T}^{a,b}\left[\exp \left(v_{1,T} T \Sigma_T + v_{2,T} T S_T +\frac{1}{2}  \left(v_{3,T} X_T + v_{4,T} \log X_T \right)\right)\right]
\end{equation}
where 
\begin{equation*}
v_{1,T}=\frac{\left(c-\gamma_T\right)^2}{8\left(1-\rho^2\right)}+ \frac{2-a}{2}v_{3,T} 
\, , \, \, \, 
v_{2,T}=\frac{\left(d-\delta_T\right)^2}{8\left(1-\rho^2\right)}- \frac{b}{2}v_{4,T}
\end{equation*}
and
\begin{equation*}
\mathcal{A}_T =T \left(1-\rho^2\right)^{-1} \left(c-\gamma_T\right)\left(d-\delta_T\right)/4
- v_{4,T} \left(aT+x_0\right)/2- v_{3,T} \left( \log x_0 + bT\right)/2. 
\end{equation*}
We now make a new change of parameters for $a$ and $b$, in order to kill the terms involving $S_T$ and $\Sigma_T$. The new time-depending parameters are given, for $T$ large enough, by 
\begin{equation*}
\alpha_T =2+\left(a-2\right)\sqrt{1 - 8\, v_{1,T}/\left(a-2\right)^2} \; \; \text{ and } \; \;  \beta_T=b\sqrt{1-8 \, v_{2,T}/b^2}.
\end{equation*}
Thus, \eqref{EE2} becomes 
\begin{equation}\label{EE3}
E_T\left(u\right)=\exp \left(-w_{1,T} x_0 - w_{2,T} \log x_0 + T \mathcal{C}_T\right) \dE_{\gamma_T, \delta_T}^{\alpha_T,\beta_T}\left( \exp \left(w_{1,T} X_T\right)X_T ^{w_{2,T}}\right)
\end{equation}
where $w_{1,T} =\left(b-\beta_T + 2 v_{4,T}\right)/4 $, $w_{2,T} =\left(a-\alpha_T + 2 v_{3,T}\right)/4 $
and $$4 \mathcal{C}_T=- 2 a v_{4,T} - 2 b v_{3,T}+ \beta_T \alpha_T - ab - (c-\gamma_T) (d-\delta_T)/(1-\rho^2).$$
Therefore, taking the logarithm of \eqref{EE3}, we obtain that
\begin{equation}\label{EE4}
 \log E_T\left(u\right)= -w_{1,T} x_0 - w_{2,T} \log x_0 + T \mathcal{C}_T+ \log \dE_{\gamma_T, \delta_T}^{\alpha_T,\beta_T}\left( \exp \left(w_{1,T} X_T\right)X_T ^{w_{2,T}}\right).
\end{equation}
We now have to divide \eqref{EE4} by $\lambda_T$ and investigate the limit for $T$ going to infinity. We consider each term of the right-hand side of \eqref{EE4} separately.
First of all, as $w_{1,T}$ and $w_{2,T}$ tend to zero for $T$ going to infinity, we immediately deduce that
\begin{equation}\label{Lim1}
\lim_{T \to +\infty} \lambda_T^{-1} \left(-w_{1,T} x_0 - w_{2,T} \log x_0 \right)=0.
\end{equation}
We now consider the term $T\mathcal{C}_T/ \lambda_T$.
On the one hand, replacing $\gamma_T$ and $\delta_T$ by their respective definitions, we easily obtain that 
\begin{equation}\label{l1}
\frac{1}{4 \left(1-\rho^2\right)} \left(c-\gamma_T\right)\left(d-\delta_T\right)\frac{T}{\lambda_T}= \left(1-\rho^2\right) u_3 u_4.
\end{equation}
On the other hand, as $\lambda_T/T$ goes to zero for $T$ tending to infinity, we expand $\beta_T \alpha_T$ up to order two in $\sqrt{\lambda_T/T}$ and obtain that:
\begin{equation}\label{l2}
\begin{array}{ll}
\frac{T}{4\lambda_T}\left(-2a v_{4,T} - 2 b v_{3,T}+ \beta_T \alpha_T - ab \right)=& \displaystyle -\frac{b}{2\left(a-2\right)} \left(\left(u_1+\rho u_3\right)^2 + \left(1- \rho^2\right) u_3^2\right)\\
&\displaystyle -\frac{a}{2b}\left(\left(u_2+\rho u_4\right)^2+ \left(1-\rho^2\right) u_4^2\right)\\
&\displaystyle + \left(u_1+\rho u_3\right)\left(u_2 +\rho u_4\right) + o\left(1\right).
\end{array}
\end{equation}
Thus, for $T$ going to infinity, the limit value of $T \mathcal{C}_T / \lambda_T$ is the sum of the limits of \eqref{l1} and \eqref{l2}.
Before concluding, we will now show that
\begin{equation}\label{l3}
\lim_{T \rightarrow +\infty} \lambda_T^{-1}  \log \dE_{\gamma_T, \delta_T}^{\alpha_T,\beta_T}\left( \exp \left(w_{1,T} X_T\right)X_T ^{w_{2,T}}\right) = 0
\end{equation}
The density function of the solution $X_T$ associated with parameters $\alpha_T$ and $\beta_T$ and initial point $x_0$ is given, for any positive real $y$, by
\begin{equation*}
f(y)= K_T \exp \left(-y/2\right) y^{(\alpha_T-2)/4} I_{(\alpha_{T}-2)/2}\left(\sqrt{y \xi_T}\right)
\end{equation*} 
where $I_{\nu}$ is the modified Bessel function of the first kind and $\xi_T$ and $K_T$ are two constants respectively given by
\begin{equation*}
\xi_T= -\frac{x_0 \beta_T}{e^{-\beta_T T}-1} \hspace{1cm} \text{and} \hspace{1cm} K_T= \frac{e^{-\xi_T/2}}{2 \xi_T^{(\alpha_T-2)/4}},
\end{equation*}
see for instance \cite{LL}.
Thus, using formulas 6.643(2) and 9.220(2) of \cite{GR}, we compute the expectation in the last term of \eqref{EE4} as follows
\begin{equation*}
\begin{array}{lcl}
\dE_{\gamma_T, \delta_T}^{\alpha_T,\beta_T}\left( \exp \left(w_{1,T} X_T\right)X_T ^{w_{2,T}}\right)& = & \int_0^{+\infty} \exp \left(w_{1,T} y\right) y ^{w_{2,T}} f(y) \, \mathrm{d}y\\
&=&\displaystyle \frac{\Gamma(w_{2,T} + \alpha_T/2)}{\Gamma(\alpha_T/2)} \,   2^{(\alpha_T+2)/4} \left(1/2- w_{1,T}\right)^{-(2 w_{2,T}+\alpha_T-1)/2}\\
& & \times \, \displaystyle e^{-\xi_T/2}\, {}_1F_1\left(w_{2,T} + \alpha_T/2 , \alpha_T/2, \xi_T/(2-4 w_{1,T}) \right)
\end{array}
\end{equation*}
where ${}_1F_1$ is the degenerate hypergeometric function (see \cite{GR}). As we want to compute the limit of the logarithm of this expectation, the obtained product becomes a sum and we investigate the limit of each term separately. For $T$ going to infinity, $\alpha_T$ converges to $a$ and $\beta_T$ to $b$ whereas $\xi_T$, $w_{1,T}$ and $w_{2,T}$ vanish. Thus, we obtain the four following limits
\begin{equation*}
\lim_{T \rightarrow +\infty} \lambda_T^{-1} \log \frac{\Gamma(w_{2,T} + \alpha_T/2)}{\Gamma(\alpha_T/2)} = \lim_{T \rightarrow +\infty} \lambda_T^{-1} \log\frac{\Gamma(a/2)}{\Gamma(a/2)}=0 ,
\end{equation*}

\begin{equation*}
\lim_{T \rightarrow +\infty} \lambda_T^{-1} \frac{\alpha_T+2}{4} \log 2 =0,
\end{equation*}

\begin{equation*}
\lim_{T \rightarrow +\infty} \lambda_T^{-1} ( w_{2,T}+\alpha_T/2-1/2) \log \left(1/2- w_{1,T}\right)= \lim_{T \rightarrow +\infty} \lambda_T^{-1}(a/2-1/2) \log(1/2) =0,
\end{equation*}

\begin{equation*}
\lim_{T \rightarrow +\infty} \lambda_T^{-1} \xi_T/2 = 0.
\end{equation*}
Furthermore, 
\begin{equation*}
\lim_{T \rightarrow +\infty} {}_1F_1\left(w_{2,T} + \alpha_T/2 , \alpha_T/2, \xi_T/(2-4 w_{1,T}) \right)= {}_1F_1\left(a/2 , a/2, 0 \right) = 1
\end{equation*}
which, combined with the previous limits, leads to \eqref{l3}.
 We conclude from the conjunction of \eqref{Lim1}, \eqref{l1}, \eqref{l2} and \eqref{l3}, that
\begin{equation}\label{ncgf_heston}\begin{array}{ll}
\displaystyle \lim_{T \to \infty} \lambda_T^{-1}\log \dE_T\left(u\right)=& \displaystyle  -\frac{b}{2\left(a-2\right)}\left(u_1^2 + 2 \rho u_1 u_3+ u_3^2\right)
-\frac{a}{2b}\left(u_2^2+ 2 \rho u_2 u_4 + u_4^2\right)\\
&\displaystyle + u_1 u_2 + u_3 u_4 + \rho \left(u_1 u_4 + u_2 u_3\right),
\end{array}
\end{equation}
which rewrites as a product of matrices given by \eqref{limncgf2}.

\begin{cor}
Let $\Lambda_T$ be the normalized cumulant generating function  of $((\lambda_T T)^{-1/2}M_T)$ given, for all $v \in \dR^2$, by
\begin{equation*}
\Lambda_T\left(v\right)= \frac{1}{\lambda_T} \log \dE \left[ e^{\sqrt{\lambda_T/T} \, \left\langle v, M_T\right\rangle}\right].
\end{equation*} Its pointwise limit $\Lambda$ satisfies
 \begin{equation}
\Lambda(v)=\lim_{T \to +\infty} \Lambda_T \left(v\right)= \frac{1}{2} \,  v^{\intercal} \,  \Sigma \, v, 
\end{equation} where the matrix $\Sigma$ was previously given in \eqref{defSigma}.
\end{cor}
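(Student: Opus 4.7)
The plan is to deduce this corollary directly from the main computation of Appendix B rather than redo the Girsanov argument from scratch. For any $v=(v_1,v_2)\in\dR^2$, define $u=(v_1,v_2,0,0)\in\dR^4$. Since $\cM_T=(M_T,N_T)^\intercal$, we have the identity $\langle u,\cM_T\rangle=\langle v,M_T\rangle$, so $\Lambda_T(v)=\cL_T(u)$ for every $T$.

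Then I would invoke the pointwise limit \eqref{limncgf2} established in Appendix B, which gives
\begin{equation*}
\lim_{T\to+\infty}\cL_T(u)=\tfrac{1}{2}\,u^\intercal\Gamma u,
\end{equation*}
and compute the right-hand side for our specific choice of $u$. Using the block form $\Gamma=R\otimes\Sigma=\begin{pmatrix}\Sigma&\rho\Sigma\\\rho\Sigma&\Sigma\end{pmatrix}$ from \eqref{defSigma}, a direct block multiplication with $u=(v,0)^\intercal$ yields $u^\intercal\Gamma u=v^\intercal\Sigma v$, whence $\Lambda(v)=\tfrac{1}{2}v^\intercal\Sigma v$ as claimed.

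There is essentially no obstacle here: all of the analytic work (the two changes of parameters, the explicit formula for the Laplace transform of $X_T$ via the modified Bessel density, and the termwise passage to the limit) has already been carried out in Appendix B. The only thing to check is that the $4$-dimensional limit formula remains valid when two of the four test variables vanish, which is obvious since the limit \eqref{limncgf2} holds at every $u\in\dR^4$. Alternatively, one could run the same argument as Appendix B with only the first change of parameters \eqref{change1} (skipping \eqref{change2} entirely, since no $W_t$-terms appear in $M_T$), but this would merely reproduce the specialization above.
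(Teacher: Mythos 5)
Your proposal is correct and coincides with the paper's own proof: the corollary is obtained by specializing the four-dimensional limit of Appendix B to $u=(v_1,v_2,0,0)$ (i.e.\ taking $u_3=u_4=0$ in the limit formula), and the block structure of $\Gamma=R\otimes\Sigma$ then gives $u^{\intercal}\Gamma u=v^{\intercal}\Sigma v$. No further comment is needed.
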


\begin{proof}
One can observe that
\begin{equation*}
\Lambda_T\left(v\right)=  \frac{1}{\lambda_T} E_T(v^{\intercal},0,0),
\end{equation*} where $E_T$ is defined and computed in the previous proof.
Taking $u_3=u_4=0$ in \eqref{ncgf_heston}, we obtain the announced result.
\end{proof}

\begin{cor}
Let $L_T$ be the normalized cumulant generating function  of $((\lambda_T T)^{-1/2}N_T)$ given, for all $v \in \dR^2$, by
\begin{equation*}
L_T\left(v\right)= \frac{1}{\lambda_T} \log \dE \left[ e^{\sqrt{\lambda_T/T} \, \left\langle v, N_T\right\rangle}\right].
\end{equation*} Its pointwise limit $L$ satisfies
 \begin{equation}
L(v)=\lim_{T \to +\infty} L_T \left(v\right)= \frac{1}{2} \,  v^{\intercal} \,  \Sigma \, v, 
\end{equation} where the matrix $\Sigma$ was previously given in \eqref{defSigma}.
\end{cor}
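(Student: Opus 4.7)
The plan is to deduce this corollary as an immediate specialization of the pointwise limit \eqref{ncgf_heston} already established in the main computation of Appendix~B, in the same way that Corollary~B.1 was obtained by taking $u_3=u_4=0$. The key observation is that $N_T$ sits in the second block of $\mathcal{M}_T=(M_T,N_T)^{\intercal}$, so if we take $u=(0,0,v_1,v_2)$ in the definition of $E_T$, then $\langle u,\mathcal{M}_T\rangle=\langle v,N_T\rangle$ and consequently
\[
L_T(v)=\frac{1}{\lambda_T}\log E_T(0,0,v_1,v_2).
\]

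Substituting $u_1=u_2=0$ and $(u_3,u_4)=(v_1,v_2)$ into \eqref{ncgf_heston}, the cross terms $u_1u_2$, $\rho u_1 u_4$, $\rho u_2 u_3$ and the mixed squares $2\rho u_1 u_3$, $2\rho u_2 u_4$ all vanish, and what remains is
\[
\lim_{T\to\infty}\lambda_T^{-1}\log E_T(0,0,v_1,v_2)=-\frac{b}{2(a-2)}v_1^{2}-\frac{a}{2b}v_2^{2}+v_1 v_2.
\]
Reading off the quadratic form $\tfrac{1}{2}v^{\intercal}\Sigma v$ with $\Sigma$ defined in \eqref{defSigma} gives exactly the same expression, so $L(v)=\tfrac{1}{2}v^{\intercal}\Sigma v$.

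There is essentially no obstacle here: all the analytic work (the two changes of probability \eqref{change1}--\eqref{change2}, the explicit Bessel density of $X_T$, and the asymptotics of the hypergeometric function $ {}_1F_1$) has already been carried out in the body of Appendix~B to obtain \eqref{ncgf_heston}. The only thing worth double-checking is that, when $u_1=u_2=0$, the time-dependent parameters $\alpha_T,\beta_T,\gamma_T,\delta_T$ built in the proof of \eqref{ncgf_heston} still satisfy the conditions needed for the various expansions to be valid for $T$ large; but since $\sqrt{\lambda_T/T}\to 0$ by \eqref{E0}, this is automatic.

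Thus the proof reduces to a one-line specialization: apply the already-established limit with three of the four coordinates of $u$ set to zero, and identify the resulting quadratic form with $\tfrac{1}{2}v^{\intercal}\Sigma v$, which is the announced result.
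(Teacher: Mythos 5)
Your proposal is correct and follows exactly the paper's own argument: the paper also observes that $L_T(v)=\lambda_T^{-1}\log E_T(0,0,v^{\intercal})$ and obtains the limit by setting $u_1=u_2=0$ in \eqref{ncgf_heston}, identifying the resulting quadratic form with $\tfrac{1}{2}v^{\intercal}\Sigma v$. Nothing is missing.
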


\begin{proof}
As for the previous corollary, one can observe that
\begin{equation*}
L_T\left(v\right)= \frac{1}{\lambda_T} E_T(0,0,v^{\intercal}),
\end{equation*}
Taking $u_1=u_2=0$ in \eqref{ncgf_heston}, we obtain the announced result.
\end{proof}

\section*{Appendix C: Proof of the exponential convergence for $S_T$ and $\Sigma_T$}
\renewcommand{\thesection}{\Alph{section}}
\renewcommand{\theequation}{\thesection.\arabic{equation}}
\setcounter{section}{3}
\setcounter{equation}{0}
We have established, in Lemma 3.1 of \cite{dRdC1}, that the couple $(S_T,\Sigma_T)$ satisfies an LDP with speed $T$ and good rate function $I$ given for any $(x,y) \in \mathbb{R}^2$ by
\begin{equation}\label{tauxI}
I(x,y) =\left\lbrace \begin{array}{ll}
  \displaystyle \frac{y}{2(xy-1)}+\frac{b^2}{8}x+\frac{(a-2)^2}{8}y + \frac{ab}{4}  & \text{if } x>0, y>0 \text{ and } xy -1 > 0,\\
  \displaystyle +\infty & \text{otherwise. } 
\end{array}   \right.
\end{equation}
We denote by $\cD_I$ the domain of $\dR^2$ where $I$ is finite. Using the contraction principle recalled in Lemma \ref{CP}, we show that the sequence $(\Sigma_T/V_T, S_T/V_T)$ satisfies an LDP with good rate function $J$ given, for all $(z,t) \in \mathbb{R}^2$, by 
\begin{equation*}
J(z,t)= \inf \left\lbrace I(x,y) | (x,y) \in \cD_I, g(x,y)=(z,t) \right\rbrace \hspace{1cm} 
\end{equation*}
where $g(x,y)=(y(xy-1)^{-1},x(xy-1)^{-1})$,
and the infimum over the empty set is equal to infinity.
One can observe that, if $z \leq 0$ or $t \leq 0$, $\left(x,y\right)$ is not inside $\cD_I$, which means that the finitude domain $\cD_J$ of $J$ is included inside $\dR^{+}_{*} \times \dR^{+}_{*}$.
For any $z>0$ and $t>0$, the condition $(z,t)=g(x,y)$ leads to 
\begin{equation}\label{condxy}
y = z (xy-1) \hspace{1cm} \text{and} \hspace{1cm} x= tz^{-1}y.
\end{equation}
Combining both, we obtain that $y$ satisfies $ty^{2}-y-z=0$. Only one solution is strictly positive, it is given by 
\begin{equation}\label{ystar}
y^{*}= \frac{1+\sqrt{1+4tz}}{2t}.
\end{equation}
We deduce from the second equality in \eqref{condxy} that the only possible value for $x$ is given by 
\begin{equation}\label{xstar}
x^{*}= \frac{1+\sqrt{1+4tz}}{2z}.
\end{equation}
Replacing \eqref{ystar} and \eqref{xstar} into \eqref{tauxI}, we obtain that for any $(z,t) \in \dR^{+}_{*} \times \dR^{+}_{*}$,
\begin{equation}\label{tauxJ}
J(z,t)= I(x^{*},y^{*})= \frac{z}{2}+ \left(\frac{b^2}{16 z} +\frac{(a-2)^2}{16 t}\right) \left(1+\sqrt{1+4tz}\right)+ \frac{ab}{4}.
\end{equation}
The function $I$ is positive and only vanishes at point $(-ab^{-1}, -b(a-2)^{-1})$, see \cite{dRdC1}. Thus, \eqref{tauxJ} implies that $J$ is positive and vanishes for $(x^{*},y^{*})= (-ab^{-1}, -b(a-2)^{-1})$. As, $(x^{*},y^{*})$ satisfy $(z,t)=g(x^{*},y^{*})$, we obtain that $J$ only vanishes at point 
$$(z_0,t_0)=  \left( \frac{x^{*}}{x^{*}y^{*}-1}, \frac{y^{*}}{x^{*}y^{*}-1}\right)=\left(-\frac{a(a-2)}{2b},-\frac{b}{2}\right).$$
Besides, applying the contraction principle again, we show that $p^{\eta}$, given in \eqref{Peta}, satisfies the following upper bound
\begin{equation}
p^{\eta} \leq - \inf \left\lbrace J(z,t) \mid (z,t)\notin \mathcal{B}\left((z_0,t_0),\eta^2\right) \right\rbrace.
\end{equation}
We want to prove that $p^{\eta}$ is strictly negative. 
 One can observe that the function $J$ is coercive. Indeed, let $K$ be the compact subset of $(\dR_{+}^{*})^2$ given by $K=[\varepsilon,A] \times [\xi, B]$ with
 \begin{equation}
 \varepsilon= \frac{b^2}{4(4-ab)}, \, \, \, \, A= \frac{4-ab}{2}, \, \,\, \,  \xi= \frac{(a-2)^2}{4(4-ab)} \, \, \, \, \text{ and } \, \, \, \,  B= \frac{4 (4-ab)^5}{b^6},
 \end{equation}
we easily show that 
$$\forall (z,t) \notin K,  \hspace{0.5cm} J(z,t) \geq 1.$$ Consequently, the infimum of $J$ over $(\dR_{+}^{*})^2 \setminus \mathcal{B}\left((z_0,t_0),\eta^2\right)$ reduces to the infimum over the compact subset $K \setminus \mathcal{B}\left((z_0,t_0),\eta^2\right)$.  As  $J$ is a continuous function, this infimum is reached for some $(z^{*},t^{*})$. As $J$ is positive and only vanishes at point $(z_0,t_0)$ which does not belong to $K \setminus \mathcal{B}\left((z_0,t_0),\eta^2\right)$, we can conclude that
  \begin{equation*}
 \inf \left\lbrace J(z,t) \mid (z,t)\notin \mathcal{B}\left((z_0,t_0),\eta^2\right) \right\rbrace = J(z^{*},t^{*}) >0,
\end{equation*}
which is exactly the result that we wanted to prove.

\bibliographystyle{acm}
\bibliography{biblio}

\end{document}